\newtheorem{theorem}{Theorem}[section]
\newtheorem{definition}[theorem]{Definition}
\newtheorem{corollary}[theorem]{Corollary}
\newtheorem{proposition}[theorem]{Proposition}
\newtheorem{example}[theorem]{Example}
\newtheorem{problem}[theorem]{Problem}
\newtheorem*{rep@theorem}{\rep@title}
\newcommand{\newreptheorem}[2]{%
\newenvironment{rep#1}[1]{%
 \def\rep@title{#2 \ref{##1}}%
 \begin{rep@theorem}}%
 {\end{rep@theorem}}}
\newcommand{\R}{\mathbb{R}}
\DeclareMathOperator{\e}{\epsilon}
\begin{document}

\title[Comparison Problems]{Comparison Problems for Radon Transforms}

    \author{Alexander Koldobsky, Michael Roysdon and Artem Zvavitch}

	\subjclass[2020]{Primary: 52A20, 42B10, 46F12; Secondary: 44A12} 
\keywords{Radon transform, spherical Radon transform, star bodies, intersection functions, intersection bodies}	
\thanks{A.K was supported in part by the National Science Foundation grant DMS-2054068.}
\thanks{M.R. was supported in part by the Zuckerman STEM Leadership program.}
\thanks{A.Z. was supported in part by the U.S. National Science Foundation Grant DMS-2000304, the United States - Israel
Binational Science Foundation (BSF) Grant 2018115, and in part by B\'ezout Labex funded by ANR, reference ANR-10-LABX-58.}

\thanks{
This material is based upon work supported by the National Science Foundation under Grant No. DMS-1929284 and the Simons Foundation under Grant No. 815891 while the authors was in residence at the Institute for Computational and Experimental Research in Mathematics in Providence, RI, during the {\it Harmonic Analysis and Convexity} semester program and while the second named author was in residence at the Institute for Computational and Experimental Research in Mathematics during the {\it Discrete Optimization: Mathematics, Algorithms, and Computation} semester program during the 2022-2023 academic year. 
}
	
\maketitle. 

\begin{abstract} Given two non-negative functions $f$ and $g$ such that the Radon transform of $f$ is pointwise smaller than the Radon transform of $g$,
does it follow that the $L^p$-norm of $f$ is smaller than the $L^p$-norm of $g$ for a given $p>0?$ We consider this problem for the classical and spherical 
Radon transforms. In both cases we point out classes of functions for which the answer is affirmative, and show that in general the answer is negative if the functions do not belong to these classes. The results are in the spirit of the solution of the Busemann-Petty problem from convex geometry, and the classes of functions that we introduce generalize the class of intersection bodies introduced by Lutwak in 1988. We also deduce slicing inequalities that are related to the well-known Oberlin-Stein type estimates for the Radon transform.
\end{abstract}


\section{Introduction}

Given two non-negative functions $f,g$ such that the Radon transform of $f$ is pointwise smaller
than the Radon transform of $g$, what can one say about the functions $f$ and $g$? 
In this article, we consider this question for two kinds of Radon transforms. 

Given a function $\varphi$ on $\R^n$, integrable over all affine hyperplanes, the (classical) Radon transform of $\varphi$ is the function $\mathcal{R}\varphi$
on $\R\times S^{n-1}$ defined by 
\[
\mathcal{R}\varphi(t,\theta) = \int_{\langle x, \theta\rangle = t}\varphi(x) dx, \quad (t,\theta) \in \R \times S^{n-1},
\]
where integration is over the Lebesgue measure in the hyperplane 
perpendicular to $\theta$ at distance $t$ from the origin. 

The spherical Radon transform of a continuous function $f$ on the sphere $S^{n-1}$ is a continuous function $Rf$ on the sphere defined by
\[
Rf(\theta)= \int_{S^{n-1} \cap \theta^{\perp}} f(\xi) d\xi,\quad \theta\in S^{n-1}.
\]
Here $\theta^{\perp}$ denotes the central hyperplane orthogonal to the direction $\theta$. For the properties of the Radon transform and spherical Radon transform and different geometric applications, we refer the reader to the monographs \cite{Gardner,Groemer,Helgason,Helgason2,Kold1, Rubin1}. 
\medbreak
We consider the following comparison problems.

\begin{problem}[Comparison problem for the spherical Radon transform] \label{p:SphericalComparison} Consider two even, continuous, 
positive functions $f,g$ on $S^{n-1},\ n\ge 3$, and let $p>0$. If 
\begin{equation}\label{e:SphericalComparison}
Rf(\theta)\le Rg(\theta) \quad \text{for all } \theta \in S^{n-1},
\end{equation}
does it follow that $\|f\|_{L^p(S^{n-1})} \leq \|g\|_{L^p(S^{n-1})}?$
\end{problem}

\begin{problem}[Comparison problem for the Radon transform] \label{p:fBP} Let $p>0$. Given a pair of even, continuous functions $\varphi,\psi \colon \R^n \to \R_+,\ n\ge 2$, each of which is integrable and integrable over all affine hyperplanes, satisfying the condition:
\begin{equation} \label{e:fBPassump}
\mathcal{R}\varphi(t,\theta) \leq \mathcal{R}\psi(t,\theta), \quad \text{for all } (t,\theta)\in \R \times S^{n-1},
\end{equation}
does it follow that $\|\varphi\|_{L^p(\R^n)} \leq \|\psi\|_{L^p(\R^n)}?$
\end{problem}
By a well-known integration formula on the sphere (see for example \cite[p.28]{Kold1}) and by
Cavalieri's principle, the answers to both problems are affirmative for $p=1.$
However, for $p\neq 1$ the conclusions of the above problems maybe fail to be true in general.
For Problem~\ref{p:fBP} we show how by the following simple example.

\begin{example} 

Let $n \geq 2$, $M>1$, $c = M^{-n+1}$, and set $p > \frac{n}{n-1}.$ Consider the functions $\varphi(x) = \chi_{B_2^n}(x)$ and $\psi(x) = c \chi_{MB_2^n}(x)$.  Then it is clear that the inequality \eqref{e:fBPassump} holds, namely that 
\[
\mathcal{R}\varphi(t,\theta) \leq \mathcal{R}\psi(t,\theta), \quad \text{for all } (t,\theta) \in \R \times S^{n-1},
\]
while $\|\varphi\|_{L^p(\R^n)} > \|\psi\|_{L^p(\R^n)}.$
\end{example}

Our main source of motivation and guidance is the Busemann-Petty problem in convex geometry which was introduced in 1956 in \cite{BP} and solved at the end of 1990's. Suppose $K,L \subset \R^n$ are two origin-symmetric convex bodies so that 
\begin{equation}\label{e:BPoriginal}
|K \cap \theta^{\perp}| \leq |L \cap \theta^{\perp}|
\end{equation}
for every direction $\theta \in S^{n-1}.$ Does it necessarily follow that $|K| \leq |L|$? Here $|\cdot|$ denotes the volume of the appropriate dimension. It was proven that the answer is affirmative when $n \leq 4$ and negative when $n \ge 5$; see \cite{Gardner, Kold1} for the solution and its history. Note that Problem~\ref{p:SphericalComparison} is a generalization of the Busemann-Petty problem (choose $f = \|\cdot\|_K^{-n+1},$ $g = \|\cdot\|_L^{-n}$ and $p = \frac{n}{n-1}$). Also, it was shown in \cite{Zvavitch1,Zvavitch2} that if one considers the Busemann-Petty problem with volume replaced by an arbitrary measure with even, continuous, and positive density, then the answer remains the same. 

One of the critical ingredients in the solution of the Busemann-Petty problem is the notion of an intersection body introduced by Lutwak \cite{GLW, Lutwak}; see Section~\ref{s:prelims} below for a definition. Lutwak showed that if the body $K$ in \eqref{e:BPoriginal} is an intersection body, then the answer to the Busemann-Petty problem is affirmative. On the other hand, every origin-symmetric convex non-intersection body can be perturbed to construct a counterexample. Therefore, the answer to the Busemann-Petty problem in $\R^n$ is affirmative if, and only if every origin-symmetric convex body in $\R^n$ is an intersection body. 

Another ingredient in the Fourier analytic solution of the Busemann-Petty problem in \cite{GKS} is the characterization of intersection bodies in terms of the Fourier transform. It was proven in \cite{Kold5} that an origin-symmetric star body $K \subset \R^n$ is an intersection body if, and only if, $\|\cdot\|_K^{-1}$ represents a positive definite distribution on $\R^n$. 

Our approach to the comparison problems is based on these two ideas. We introduce special classes of functions that play the role of intersection bodies. 
For the spherical comparison problem, this is the class of functions $f$ on $S^{n-1}$ for which the extension of $f^p$ to an even homogeneous of degree -1 function on $\R^n$ represents a positive definite distribution. The results resemble Lutwak's connections in the Busemann-Petty problem.
 
\begin{theorem}\label{t:sphericalAffirmativeCase} Let $f,g$ be even continuous positive functions on the sphere $S^{n-1}$, and suppose that \begin{equation}\label{comp1}
Rf(\theta)\le Rg(\theta),\qquad \text{for all } \theta \in S^{n-1}.
\end{equation}
Then:
\begin{itemize}
\item[(a)] Suppose that for some $p>1$ the function $|x|_2^{-1}f^{p-1}\left(\frac{x}{|x|_2}\right)$ represents a positive definite distribution on $\R^n$. 
Then $\|f\|_{L^{p}(S^{n-1})} \leq \|g\|_{L^{p}(S^{n-1})}.$
\item[(b)] Suppose that for some $0 < p <1$ the function $|x|_2^{-1}g^{p-1}\left(\frac{x}{|x|_2}\right)$ represents a positive definite distribution on $\R^n$.  Then $\|f\|_{L^p(S^{n-1})} \leq \|g\|_{L^p(S^{n-1})}$.
\end{itemize}
\end{theorem}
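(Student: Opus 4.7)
The plan is to reduce the desired $L^p$-comparison to a Radon transform comparison through an elementary convexity (respectively concavity) inequality for the function $t \mapsto t^p$, and then to exploit the positive-definiteness hypothesis by expressing the relevant power of $f$ (or $g$) as the spherical Radon transform of a non-negative measure on $S^{n-1}$.

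For part (a), since $p>1$, convexity of $t \mapsto t^p$ on $[0,\infty)$ gives the pointwise inequality
\[
g(\theta)^p - f(\theta)^p \ge p\, f(\theta)^{p-1}\bigl(g(\theta) - f(\theta)\bigr), \qquad \theta \in S^{n-1}.
\]
Integrating over $S^{n-1}$ reduces the conclusion $\|f\|_{L^p(S^{n-1})}^p \le \|g\|_{L^p(S^{n-1})}^p$ to the single inequality $\int_{S^{n-1}} f^{p-1}(g - f)\, d\sigma \ge 0$. For part (b), $t \mapsto t^p$ is concave, but the mean-value identity $g^p - f^p = p\,\xi^{p-1}(g-f)$ with $\xi$ between $f$ and $g$, together with the monotonicity of $t \mapsto t^{p-1}$ for $0 < p < 1$, gives the analogous bound $g^p - f^p \ge p\, g^{p-1}(g - f)$, so the problem reduces to proving $\int_{S^{n-1}} g^{p-1}(g - f)\, d\sigma \ge 0$.

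The next step is to invoke the Fourier-analytic characterization set up in the preliminaries: an even continuous positive function $h$ on $S^{n-1}$ has the property that its $-1$-homogeneous extension $|x|_2^{-1}\, h(x/|x|_2)$ is a positive definite distribution on $\R^n$ if and only if $h$ is the spherical Radon transform of a non-negative even Borel measure $\mu$ on $S^{n-1}$, that is, $h = R\mu$. This is the functional analogue of Koldobsky's Fourier characterization of intersection bodies, and it is precisely what the positive-definiteness hypothesis is designed to deliver. Applying it with $h = f^{p-1}$ in case (a) and with $h = g^{p-1}$ in case (b) produces a non-negative even measure $\mu$ realizing this representation.

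Finally, I combine this representation with the self-adjointness of the spherical Radon transform, which in the measure-valued pairing reads $\int_{S^{n-1}} (R\mu)\,\psi\, d\sigma = \int_{S^{n-1}} R\psi\, d\mu$. Applied to $\psi = g - f$ in case (a) this yields
\[
\int_{S^{n-1}} f^{p-1}(g - f)\, d\sigma = \int_{S^{n-1}} R\mu \cdot (g-f)\, d\sigma = \int_{S^{n-1}} (Rg - Rf)\, d\mu \ge 0,
\]
because $Rg \ge Rf$ by \eqref{comp1} and $\mu \ge 0$; the same calculation works verbatim in case (b). The main technical point, and the conceptual heart of the proof, is the equivalence invoked in the second step: translating positive definiteness of the $-1$-homogeneous extension into representability as a spherical Radon transform of a non-negative measure. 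Once that tool is in hand, the remainder of the argument is a routine self-adjointness pairing, and no additional regularity issues should arise because $f$ and $g$ are assumed even, continuous, and positive.
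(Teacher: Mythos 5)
Your proof is correct, and it reaches the paper's key intermediate inequality by an equivalent but slightly different packaging. The paper stays on the Fourier side: it rewrites $Rf\le Rg$ as $(f\cdot r^{-n+1})^\wedge\le (g\cdot r^{-n+1})^\wedge$ via \eqref{section-fourier}, integrates against the measure furnished by the non-smooth Parseval formula \eqref{parseval1} for the positive definite distribution $f^{p-1}\cdot r^{-1}$ (resp.\ $g^{p-1}\cdot r^{-1}$), obtaining $\int f^p\le\int f^{p-1}g$ (resp.\ $\int fg^{p-1}\le\int g^p$), and then finishes with H\"older (resp.\ the reverse H\"older inequality \eqref{e:reversHolder}). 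You instead invoke the intersection-body-style characterization --- positive definiteness of $h\cdot r^{-1}$ gives $h=R\mu$ for a non-negative finite measure, in the functional sense $\int_{S^{n-1}}h\psi\,d\sigma=\int_{S^{n-1}}R\psi\,d\mu$ --- and pair it with $\psi=g-f$ using self-duality; this is legitimate here because, as the paper notes, every positive continuous function on $S^{n-1}$ is a Minkowski functional, so the characterization from \cite{Kold5} (equivalently, \eqref{parseval1} combined with \eqref{section-fourier}) applies to $f^{p-1}$ and $g^{p-1}$, and it yields exactly the same inequality $\int f^{p-1}(g-f)\ge 0$, i.e.\ $\int f^p\le\int f^{p-1}g$. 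Your endgame also differs: instead of H\"older and reverse H\"older you use the first-order convexity inequality $g^p\ge f^p+pf^{p-1}(g-f)$ for $p>1$ and the concavity (tangent-line) inequality $f^p\le g^p+pg^{p-1}(f-g)$ for $0<p<1$, which is an equally valid and arguably more elementary finish (note that for $0<p<1$ it uses the assumed strict positivity of $g$, which is available). What the paper's route buys is self-containedness within the stated Parseval machinery and the explicit norm comparison in one line; what your route buys is a more geometric reading of the hypothesis (representation as a spherical Radon transform of a non-negative measure) and avoidance of the reverse H\"older inequality in case (b).
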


\begin{theorem}\label{t:sphericalNegativeCase} The following hold true: 
\begin{itemize}
    \item[(a)] Let $g$ be an infinitely smooth strictly positive even function on $S^{n-1}$ and $p>1$. Suppose that the distribution $|x|_2^{-1}g^{p-1}\left(\frac{x}{|x|_2}\right)$ is not  positive definite on $\R^n$. Then there exists an infinitely smooth even function $f$ on $S^{n-1}$ so that the condition (\ref{comp1}) holds, but $\|f\|_{L^p(S^{n-1})} > \|g\|_{L^p(S^{n-1})}$.

    \item[(b)] Let $f$ be an infinitely smooth strictly positive even function on $S^{n-1}$ and $0 < p <1$. Suppose that the distribution $|x|_2^{-1}f^{p-1}\left(\frac{x}{|x|_2}\right)$ is not  positive definite on $\R^n$. Then there exists an infinitely smooth even function $g$ on $S^{n-1}$ so that the condition (\ref{comp1}) holds, but $\|f\|_{L^p(S^{n-1})} > \|g\|_{L^p(S^{n-1})}.$

\end{itemize}
\end{theorem}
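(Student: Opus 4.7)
The plan is to prove part~(a); part~(b) is obtained by the symmetric construction and will be discussed at the end. The strategy is to build the counterexample as a small perturbation $f = g - \varepsilon u$ of $g$, where $u$ is a smooth even function on $S^{n-1}$ and $\varepsilon > 0$ is chosen small enough that $f$ is still strictly positive (possible because $g$ is bounded below by a positive constant on the compact sphere). The condition $Rf \le Rg$ becomes $Ru \ge 0$, while a first-order Taylor expansion of $(g-\varepsilon u)^p$ yields
\[
\|f\|_{L^p(S^{n-1})}^p - \|g\|_{L^p(S^{n-1})}^p \;=\; -\,p\,\varepsilon \int_{S^{n-1}} g^{p-1}(\theta)\, u(\theta)\, d\theta \;+\; O(\varepsilon^2).
\]
It therefore suffices to produce a smooth even $u$ on $S^{n-1}$ such that $Ru \ge 0$ everywhere and $\int_{S^{n-1}} g^{p-1} u\, d\theta < 0$; then $f = g - \varepsilon u$ works for all small enough $\varepsilon > 0$.

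To produce such a $u$, I would invoke the Fourier characterization of positive-definite homogeneous distributions that underlies the theory of intersection bodies (cf.~\cite{Kold1}). For every smooth even $h$ on $S^{n-1}$, the Fourier transform of the $(-1)$-homogeneous extension $h(x/|x|_2)/|x|_2$ is represented on $\mathbb{R}^n\setminus\{0\}$ by a smooth function proportional to $(R^{-1}h)(\xi/|\xi|_2)\,|\xi|_2^{-(n-1)}$, where $R^{-1}$ denotes the inverse of the spherical Radon transform on the space of even $C^\infty$ functions on $S^{n-1}$; this inverse is well-defined because, by the Funk--Hecke formula, $R$ acts as a nonzero scalar multiplier on every even spherical harmonic subspace and the resulting $R^{-1}$ preserves smoothness. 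Applied with $h = g^{p-1}$---which is $C^\infty$ since $g$ is smooth and strictly positive---the hypothesis that $|x|_2^{-1} g^{p-1}(x/|x|_2)$ is not positive definite translates into the statement that the smooth even function $R^{-1}(g^{p-1})$ is strictly negative at some point $\theta_0\in S^{n-1}$, hence, by continuity, on an even open neighborhood $V\cup(-V)$ of $\theta_0$.

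Now pick any nonzero nonnegative $C^\infty$ even function $\phi$ on $S^{n-1}$ supported in $V\cup(-V)$ and set $u := R^{-1}\phi$. Then $u$ is smooth and even, $Ru = \phi \ge 0$, and by the self-adjointness of $R$ (and hence of $R^{-1}$ on $C^\infty_{\text{even}}$)
\[
\int_{S^{n-1}} g^{p-1}(\theta)\, u(\theta)\, d\theta \;=\; \int_{S^{n-1}} R^{-1}(g^{p-1})(\theta)\, \phi(\theta)\, d\theta \;<\; 0,
\]
since the first factor is strictly negative on the support of $\phi$. This completes part~(a). For part~(b), one swaps the roles: given the fixed smooth positive $f$ with $|x|_2^{-1} f^{p-1}(x/|x|_2)$ not positive definite, seek the counterexample in the form $g = f + \varepsilon u$ and construct $u$ exactly as above but with $h = f^{p-1}$, so that $Ru \ge 0$ and $\int_{S^{n-1}} f^{p-1} u\, d\theta < 0$; the first-order expansion of $(f+\varepsilon u)^p$ (which is valid for $0 < p < 1$ as well, since $f$ is bounded below) then gives $\|g\|_{L^p(S^{n-1})}^p < \|f\|_{L^p(S^{n-1})}^p$ for small $\varepsilon > 0$. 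The main analytic hurdle is the Fourier identification in the middle paragraph: one must verify, using the smoothness hypothesis, that the $(-1)$-homogeneous distribution in question is genuinely represented on $\mathbb{R}^n\setminus\{0\}$ by a continuous density proportional to $R^{-1}(g^{p-1})$, so that nonpositive-definiteness really is witnessed by a pointwise sign change on the sphere, with no delta-type terms at the origin interfering.
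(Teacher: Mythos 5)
Your construction is essentially the same as the paper's: the paper also takes $f = g - \e\varphi$ where (in your notation) $\varphi = cR^{-1}\psi$ for a nonnegative smooth even bump $\psi$ supported where $R^{-1}(g^{p-1})<0$, and the identification of the Fourier transform of $g^{p-1}\cdot r^{-1}$ with a smooth $(-n{+}1)$-homogeneous density --- the point you flag at the end as the main analytic hurdle --- is supplied by \cite[Lemma~3.16]{Kold1}, so there are indeed no singular terms at the origin. The one place you depart is the endgame: the paper obtains the exact inequality $\int_{S^{n-1}} g^p < \int_{S^{n-1}} f\,g^{p-1}$ via Parseval and then applies H\"older to get $\|g\|_p < \|f\|_p$ with no smallness-of-$\e$ bookkeeping beyond positivity of $f$, whereas you use a first-order Taylor expansion of $(g-\e u)^p$, which is also valid but requires the extra (routine) uniform control of the $O(\e^2)$ remainder using that $g$ is bounded below on the compact sphere.
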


To further investigate Problem~\ref{p:fBP}, we introduce the class of intersection functions.

\begin{definition} \label{d:introintfunction}
An even, continuous, non-negative, and integrable function $f$ defined on $\R^n$ is called an {\it intersection function} if, for every direction $\theta \in S^{n-1}$, the function 
\[
r \in \R \mapsto |r|^{n-1} \hat{f} (r\theta)
\]
is positive definite, where $\widehat{f}$ denotes the Fourier transforms of $f$ on $\R^n$. 
\end{definition}

We chose the Fourier definition rather than a more geometric one which is similar to the original definition of intersection bodies in \cite{GLW}. The geometric
definition now becomes a theorem, as follows.

\begin{theorem} \label{t:classification} An even, continuous, non-negative, and integrable function $f$ defined on $\R^n$ is an intersection function if, and only if, for every direction $\theta \in S^{n-1}$, there exists a non-negative, even, finite Borel measure $\mu_{\theta}$ on $\R$ such that 
\begin{itemize}
    \item the function 
\[
\theta \in S^{n-1} \mapsto \int_{\R} \mathcal{R}\varphi(t,\theta) d\mu_{\theta}(t)
\]
belongs to $L^1(S^{n-1})$ whenever $\varphi \in \mathcal{S}(\R^n)$, and 
\item 
\begin{equation}\label{e:intfunrelation}
\int_{\R^n}f \varphi = \int_{S^{n-1}} \int_{\R} \mathcal{R}\varphi(t,\theta) d\mu_{\theta}(t) d\theta.  
\end{equation}
holds for all $\varphi \in \mathcal{S}(\R^n).$
\end{itemize}
    
\end{theorem}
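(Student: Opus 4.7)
The plan is to reduce both characterizations to a single Fourier-analytic identity
\[
\hat\mu_\theta(r) \;=\; c_n \,|r|^{n-1}\,\hat f(r\theta)
\]
for a positive constant $c_n$, and then use Bochner's theorem to translate positive definiteness of the right-hand side into non-negativity of the measure $\mu_\theta$ on the left. The two analytic tools driving the equivalence are the Fourier slice formula $\widehat{\mathcal{R}\varphi(\cdot,\theta)}(r) = \hat\varphi(r\theta)$ and Parseval's identity in dimensions one and $n$.

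For the ``if'' direction, I would start from the assumed representation \eqref{e:intfunrelation} and apply 1D Parseval in the $t$-variable, using evenness of $\mu_\theta$ and the Fourier slice formula, to rewrite the inner integral as a pairing of $\hat\varphi(r\theta)$ against $\hat\mu_\theta(r)$ over $r\in\R$. Meanwhile, for even $\varphi$, $n$-dimensional Parseval together with polar coordinates (and an evenness extension from $r>0$ to $r\in\R$) expresses $\int_{\R^n} f\varphi$ as a weighted integral of $|r|^{n-1}\hat f(r\theta)\hat\varphi(r\theta)$ over $S^{n-1}\times\R$. Comparing the two, and letting $\hat\varphi$ range over all even Schwartz functions on $\R^n$, forces the displayed identity above; the resulting positive definiteness of $|r|^{n-1}\hat f(r\theta)$ then follows because $\mu_\theta\ge 0$, so $f$ is an intersection function. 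For the ``only if'' direction, I would run the argument in reverse: starting from positive definiteness of $|r|^{n-1}\hat f(r\theta)$, invoke (the distributional form of) Bochner's theorem to produce a non-negative Borel measure $\mu_\theta$ satisfying the displayed identity, which is automatically even because $\hat f$ is. Re-running the Parseval/polar chain with these measures yields \eqref{e:intfunrelation}, and the required $L^1(S^{n-1})$ condition falls out of the same computation since, on the Fourier side, the $\theta$-integrand is dominated by $|\hat f(r\theta)\hat\varphi(r\theta)||r|^{n-1}$, which is integrable by the Schwartz decay of $\hat\varphi$ and boundedness of $\hat f$.

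The main delicate point is the correct version of Bochner's theorem to invoke. Since $|r|^{n-1}\hat f(r\theta)$ is continuous and vanishes at $r=0$, the classical pointwise Bochner theorem for bounded continuous positive definite functions would force $\mu_\theta(\R)=0$; one therefore has to appeal to the Schwartz--Bochner theorem for tempered distributions, producing a non-negative Borel measure $\mu_\theta$ with possibly infinite total mass, and interpret the key identity and all Parseval manipulations in the distributional sense. Once this framework is in place, the use of Fubini, the interchange of the Parseval transform in $t$ with the outer integration over $\theta$, and the verification of the $L^1(S^{n-1})$ condition are routine consequences of $f\in L^1(\R^n)$ and $\varphi\in\mathcal{S}(\R^n)$.
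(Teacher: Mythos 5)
Your skeleton matches the paper's proof: both directions run through the Fourier slice theorem, Parseval in dimensions one and $n$, polar coordinates, and Bochner's theorem, pivoting on the identity $\hat\mu_\theta(r)=c\,|r|^{n-1}\hat f(r\theta)$; in particular your ``if'' direction is essentially the paper's argument. The genuine gap is in the ``only if'' direction, at the Bochner step. The theorem asserts the existence of a \emph{finite} non-negative Borel measure $\mu_\theta$, and the paper obtains exactly this by applying the classical Bochner theorem to the positive definite function $r\mapsto|r|^{n-1}\hat f(r\theta)$ (and, for the converse, by using that the Fourier transform of a finite measure is a bounded continuous positive definite function $f_\theta$, then identifying $\hat f(x)$ with $c\,|x|_2^{-n+1}f_{x/|x|_2}(|x|_2)$ as distributions). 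You explicitly discard the classical theorem and invoke Bochner--Schwartz instead, producing a tempered measure of possibly infinite total mass. That proves a weaker statement than the one claimed: you no longer deliver the finiteness demanded by the theorem (and relied on later, e.g.\ when the measures $\mu_\theta$ are integrated against the comparison inequality in the proof of Theorem \ref{t:L2BP}), and the $L^1(S^{n-1})$ bullet together with the Fubini/Parseval interchanges, which you declare routine, are precisely the points that stop being automatic once $\mu_\theta(\R)=\infty$ is allowed; your proposed domination by $|r|^{n-1}|\hat f(r\theta)||\hat\varphi(r\theta)|$ controls the Fourier-side integrand, not $\int_\R\mathcal{R}\varphi(t,\theta)\,d\mu_\theta(t)$ itself.

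Your motivating observation --- that for $f\in L^1(\R^n)$ the function $|r|^{n-1}\hat f(r\theta)$ is continuous and vanishes at $r=0$, so in the classical pointwise sense it can be positive definite only if it vanishes identically --- is a sharp remark about the hypotheses, but it does not force your fix. Under the classical reading the two sides of the equivalence degenerate together ($h_\theta(0)=0$ corresponds to $\mu_\theta(\R)=0$), while the nontrivial instances the paper has in mind are functions whose Fourier transform carries a $(-n+1)$-homogeneous singularity at the origin, as in the spherical Radon transform and intersection body examples of Section 5. If you prefer the distributional notion of positive definiteness, you must then either prove finiteness of $\mu_\theta$ (false in general in that setting) or state and prove a correspondingly weakened theorem; as written, your proposal does neither, so it does not establish the theorem as stated.
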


We will use both the Fourier definition and the geometric characterization to point out many examples of intersection functions.  In particular, we will see that the class of intersection bodies of star bodies in $\R^n$ can be identified as part of the class of intersection functions.

We now formulate analogs of Lutwak's connections for the Problem~\ref{p:fBP}.

\begin{theorem}\label{t:L2BP} Let $p >0$ and consider a pair of continuous, non-negative even functions $\varphi,\psi \in L^1(\R^n) \cap L^p(\R^n)$ satisfying the condition 
\begin{equation}\label{e:LpBPassump}
\mathcal{R}\varphi(t,\theta) \leq \mathcal{R}\psi(t,\theta) \quad \text{for all } (t,\theta) \in \R \times S^{n-1}.
\end{equation} Then:
\begin{itemize}
    \item[(a)] if $p >1$ and $\varphi^{p-1}$ is an intersection function, then $\|\varphi\|_{L^p(\R^n)} \leq \|\psi\|_{L^p(\R^n)}$, and 
    \item[(b)] if $0 < p <1$ and $\psi^{p-1}$ is an intersection function, then $\|\varphi\|_{L^p(\R^n)} \leq \|\psi\|_{L^p(\R^n)}$.
\end{itemize}

\end{theorem}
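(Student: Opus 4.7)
The plan is to transplant Lutwak's classical argument from the solution of the Busemann-Petty problem to this setting: test the intersection function $f=\varphi^{p-1}$ (in part (a)) or $f=\psi^{p-1}$ (in part (b)) against both $\varphi$ and $\psi$, use the geometric representation from Theorem~\ref{t:classification} to convert the pointwise Radon inequality (\ref{e:LpBPassump}) into a scalar inequality between two integrals, and close the loop with Hölder's inequality or its reverse.

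For part (a), set $f=\varphi^{p-1}$. Since $f$ is an intersection function, Theorem~\ref{t:classification} supplies a family of non-negative even finite Borel measures $\{\mu_\theta\}_{\theta\in S^{n-1}}$ on $\R$ with
\[
\int_{\R^n} f(x)\,h(x)\,dx \;=\; \int_{S^{n-1}}\!\int_{\R}\mathcal{R}h(t,\theta)\,d\mu_\theta(t)\,d\theta \qquad \text{for all } h\in\mathcal{S}(\R^n).
\]
Applying this identity (extended as described below) to $h=\varphi$ and $h=\psi$, the hypothesis $\mathcal{R}\varphi\le\mathcal{R}\psi$ together with the positivity of the $\mu_\theta$ gives $\int f\varphi \le \int f\psi$, and thus
\[
\|\varphi\|_{L^p(\R^n)}^{p} \;=\; \int_{\R^n}\varphi^{p-1}\varphi \;\le\; \int_{\R^n}\varphi^{p-1}\psi \;\le\; \|\varphi\|_{L^p(\R^n)}^{p-1}\,\|\psi\|_{L^p(\R^n)},
\]
where the last step is Hölder's inequality with conjugate exponents $p/(p-1)$ and $p$. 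Dividing by $\|\varphi\|_{L^p(\R^n)}^{p-1}$ gives the conclusion.

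Part (b) is parallel with $f=\psi^{p-1}$: the same chain of steps yields
\[
\int_{\R^n}\psi^{p-1}\varphi \;\le\; \int_{\R^n}\psi^{p-1}\psi \;=\; \|\psi\|_{L^p(\R^n)}^{p},
\]
and since $0<p<1$, the reverse Hölder inequality with the conjugate pair $p$ and $q=p/(p-1)<0$ (which satisfies $1/p+1/q=1$) gives, using $(p-1)q=p$ and hence $\|\psi^{p-1}\|_{L^q(\R^n)} = \|\psi\|_{L^p(\R^n)}^{p-1}$,
\[
\int_{\R^n}\psi^{p-1}\varphi \;\ge\; \|\varphi\|_{L^p(\R^n)}\,\|\psi\|_{L^p(\R^n)}^{p-1}.
\]
Combining the two inequalities and dividing by $\|\psi\|_{L^p(\R^n)}^{p-1}$ yields $\|\varphi\|_{L^p(\R^n)}\le\|\psi\|_{L^p(\R^n)}$.

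The one non-algebraic step, and hence the main obstacle, is the extension of the identity from Theorem~\ref{t:classification} from Schwartz test functions to the continuous $L^1\cap L^p$ functions $\varphi,\psi$ at hand. My plan is to approximate each of $\varphi$ and $\psi$ by a sequence $h_k\in\mathcal{S}(\R^n)$ obtained by convolution with a smooth approximate identity followed by multiplication by a smooth cutoff, and then pass to the limit by dominated convergence on both sides. On the left, $\int f\,h_k\to\int f\,h$ is controlled by $|f|\cdot(\rho*|h|)\cdot\chi$ which is in $L^1$ by Hölder (using $\varphi^{p-1}\in L^{p/(p-1)}$ in part (a), and the integrability already established in the inequality chain for part (b)). On the right, one has $\mathcal{R}h_k(t,\theta)\to\mathcal{R}h(t,\theta)$ pointwise and dominated by a quantity whose integral against $d\mu_\theta(t)\,d\theta$ is the $L^1(S^{n-1})$ function promised by Theorem~\ref{t:classification}. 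Producing a uniform-in-$k$ $L^1(S^{n-1})$ dominant is the only technical task; once that is in place, the algebraic steps above deliver both parts of the theorem.
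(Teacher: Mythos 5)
Your proposal follows essentially the same route as the paper: test the intersection function $\varphi^{p-1}$ (resp.\ $\psi^{p-1}$) against $\varphi$ and $\psi$ via the geometric identity of Theorem~\ref{t:classification}, push the pointwise Radon inequality through the positive measures $\mu_\theta$ to obtain $\int\varphi^{p-1}\varphi\le\int\varphi^{p-1}\psi$, and finish with H\"older's inequality (reverse H\"older for $0<p<1$). The only difference is that the paper dispatches the passage from Schwartz functions to general $L^1\cap L^p$ functions with a ``without loss of generality'' remark, whereas you flag that extension explicitly and sketch an approximation argument---a worthwhile clarification, but not a different method.
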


We also give a counterexample to Problem~\ref{p:fBP}. 

\begin{theorem}\label{t:RadonComparisonNegative} The following hold: 
\begin{itemize}
    \item[(a)] Fix $p >1$ and let $\psi \in \mathcal{S}(\R^n)$ be non-negative and even. If $\psi^{p-1}$ is not an intersection function, then there exists an even, non-negative  $\varphi \in \mathcal{S}(\R^n)$ such that 
\[
\mathcal{R} \varphi(t,\theta) \leq \mathcal{R} \psi(t,\theta) \quad \text{for all } (t,\theta) \in \R \times S^{n-1},
\]
but with $\|\psi\|_{L^{p}(\R^n)} < \|\varphi\|_{L^{p}(\R^n)}$.
\item[(b)] Fix $0 < p <1$ and let $\varphi \in \mathcal{S}(\R^n)$ be non-negative and even. If $\varphi^{p-1}$ is not an intersection function, then there exists a non-negative, even $\psi \in \mathcal{S}(\R^n)$ such that $\mathcal{R}\varphi \leq \mathcal{R}\psi$, but with $\|\psi\|_{L^{p}(\R^n)} < \|\varphi\|_{L^{p}(\R^n)}$.
\end{itemize} 
\end{theorem}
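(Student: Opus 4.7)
The plan is to mirror the Fourier-analytic perturbation argument behind the counterexample to the Busemann--Petty problem: the obstruction to $\psi^{p-1}$ being an intersection function is a frequency direction in which positive definiteness fails, and we convert this failure into a signed Schwartz perturbation $v$ whose Radon transform is pointwise non-negative but whose pairing with $\psi^{p-1}$ is strictly negative.

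By Definition~\ref{d:introintfunction}, there exists $\theta_0 \in S^{n-1}$ such that
\[
F_{\theta_0}(r) := |r|^{n-1}\widehat{\psi^{p-1}}(r\theta_0)
\]
fails to be positive definite on $\R$, so we can fix an even Schwartz $k$ on $\R$ with $\hat k \geq 0$ and $\int_{\R} F_{\theta_0}(r)k(r)\,dr < 0$. Continuity of $\theta \mapsto \widehat{\psi^{p-1}}(r\theta)$ makes the same strict inequality persist for $\theta$ in some symmetric neighborhood $U$ of $\pm\theta_0$; pick an even, smooth, non-negative, non-zero $\Omega \in C^\infty(S^{n-1})$ with $\mathrm{supp}(\Omega)\subset U$ and define $v \in \mathcal{S}(\R^n)$ by prescribing
\[
\hat v(r\theta) = \Omega(\theta)\,k(r), \quad r>0,\ \theta \in S^{n-1},
\]
extended evenly (with $k$ arranged to vanish to high order at $0$ so that $\hat v$ is smooth at the origin). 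By the Fourier slice theorem the one-dimensional Fourier transform of $t\mapsto \mathcal{R}v(t,\theta)$ equals $\hat v(r\theta)=\Omega(\theta)k(r)$, which yields
\[
\mathcal{R}v(t,\theta) = \Omega(\theta)\,\check{k}(t) \geq 0
\]
since $\Omega\geq 0$ and $\check{k}\geq 0$ (the latter follows from evenness of $k$ and $\hat k\geq 0$). Setting $\varphi := \psi - \varepsilon v$ for small $\varepsilon > 0$ gives $\mathcal R \varphi \leq \mathcal R\psi$ pointwise, while Parseval combined with polar integration yields
\[
\int_{\R^n}\psi^{p-1}\,v \,dx = c_n\int_{S^{n-1}}\Omega(\theta)\int_{\R}F_\theta(r)k(r)\,dr\,d\theta < 0,
\]
so a first-order Taylor expansion gives $\|\varphi\|_{L^p}^p - \|\psi\|_{L^p}^p = -p\varepsilon\int\psi^{p-1}v + O(\varepsilon^2) > 0$ for sufficiently small $\varepsilon$. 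Part~(b) is symmetric: take $\psi := \varphi + \varepsilon v$, with $v$ built from the failure of $\varphi^{p-1}$ to be an intersection function.

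The principal obstacle is ensuring $\varphi\geq 0$ pointwise, which is required both by the hypothesis on $\varphi$ and for the validity of the Taylor expansion of $a\mapsto a^p$. Since $\psi\in\mathcal{S}(\R^n)$ decays at infinity, a bounded perturbation $v$ cannot be dominated by $\varepsilon\psi$ uniformly; this is resolved by convolving the Fourier-side data $\Omega(\theta)k(r)$ with a radial, rapidly decaying factor $\hat\rho$ that forces $v$ to decay at least as fast as $\psi$, while preserving $\check k\ast\check\rho\geq 0$ and the sign of $\int F_\theta k\rho\,dr$ on $U$ (since the sign was strict and the modification is small). The remainder in the Taylor expansion is then controlled by the pointwise bound $|(a-b)^p - a^p + p a^{p-1}b| \lesssim a^{p-2}b^2 + |b|^p$ combined with the integrability of $\psi^{p-2}v^2$ and $v^p$.
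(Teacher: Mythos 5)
Your overall strategy matches the paper's: locate $\theta_0$ where $r\mapsto|r|^{n-1}\widehat{\psi^{p-1}}(r\theta_0)$ fails to be positive definite, build a Schwartz perturbation $v$ with $\mathcal{R}v\geq0$ pointwise and $\int_{\R^n}\psi^{p-1}v<0$, set $\varphi=\psi-\varepsilon v$, and conclude from $\int\psi^{p-1}\varphi>\int\psi^p$. However, the construction of $v$ has a genuine gap. You take $\hat v(r\theta)=\Omega(\theta)k(r)$ with $\Omega$ localized near $\pm\theta_0$ and $\hat k\geq0$, and propose to make $k$ vanish to high order at $0$ so that $\hat v$ is smooth there. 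This is impossible: if $k$ is even with $\hat k\geq0$ and $k\not\equiv0$, then $k(0)=\tfrac{1}{2\pi}\int_\R\hat k>0$. More structurally, requiring $r\mapsto\hat v(r\theta)$ to be positive definite for every $\theta$ (which you need for $\mathcal{R}v(\cdot,\theta)\geq0$) forces $\hat v(0)=\sup_\xi|\hat v(\xi)|$, so $\hat v$ cannot be suppressed near the origin; with nonconstant $\Omega$ the radial limit $\lim_{r\to0^+}\hat v(r\theta)=\Omega(\theta)k(0)$ depends on $\theta$, so $\hat v$ is not even continuous at the origin and $v\notin\mathcal S(\R^n)$, violating the Schwartz requirement in the statement. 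You need a different mechanism to localize the perturbation in direction while keeping $\hat v$ smooth at $0$ with $\hat v(0)>0$.

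Two further points. The Taylor-expansion ending is more delicate than necessary and is not what the paper does: your remainder bound requires integrability of $\psi^{p-2}v^2$, which can fail when $1<p<2$ and $\psi$ has zeros, and it only gives the conclusion for small $\varepsilon$. The paper passes from $\int\psi^{p-1}\varphi>\int\psi^p$ directly to $\|\psi\|_{L^p}<\|\varphi\|_{L^p}$ via H\"older's inequality, $\int\psi^{p-1}\varphi\leq\|\psi\|_{L^p}^{p-1}\|\varphi\|_{L^p}$, with no remainder analysis; you should do the same. Finally, the issue of ensuring $\varphi=\psi-\varepsilon v\geq0$ pointwise is real (you correctly flag that $v$ need not be dominated by $\varepsilon\psi$), but the convolution fix you sketch is not worked out: it is not shown that the modification preserves both $\mathcal{R}v\geq0$ and the strict negativity of $\int\psi^{p-1}v$ while producing the required decay of $v$ relative to $\psi$.
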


Since the answer to the Busemann-Petty problem is negative in most dimensions, it make sense to ask if it holds up to some absolute constant. This is the so-called {\it isomorphic Busemann-Petty problem} and was introduced in \cite{MP}: Given any pair of origin-symmetric convex bodies $K,L \subset \R^n$ satisfying the condition \eqref{e:BPoriginal}, does it follow that $|K| \leq C |L|$ for some absolute constant $C>0$? 

As shown in \cite{MP}, the isomorphic Busemann-Petty problem is equivalent to the slicing problem of Bourgain \cite{Bourgain0,Bourgain2}: Does there exist an absolute constant $C >0$ such that, for any $n \in \mathbb{N}$ and  for any origin-symmetric convex body $K$ in $\R^n$,
\[
|K|^{\frac{n-1}{n}} \leq C \max_{\theta \in S^{n-1}} |K \cap \theta^{\perp}|?
\]
Both the isomorphic Busemann-Petty problem and slicing problem remain open. In \cite{Bourgain2} Bourgain showed that $C \leq O(n^{1/4}\log(n))$. Klartag \cite{Klartag} removed logarithmic term in Bourgain's estimate. Chen  \cite{Chen} proved that $C \leq O(n^{\e})$ for every $\e>0$ as $n$ tends to infinity. Klartag and Lehec \cite{KLeh} established a polylog bound $C\leq O(\log^4n)$. The proof of Klartag and Lehec was slightly refined in \cite {JLV} to get 
$C\leq \log^{2.2226}n$. Finally, in \cite{Klartag2} Klartag improved the estimate to $C \leq \sqrt{\log n}$.

Extensions and analogs of the slicing problem to arbitrary functions was studied in \cite{CGL,GKZ,GK,KK,KL,Kold2,Kold3,Kold4,KPZ}. In particular, it was proved in \cite{Kold3} that for any $n \in \mathbb{N}$, any star body $K$ in $\R^n$ and and non-negative continuous function $f$ on $K$, one has 
\[
\int_K f \leq 2 d_{ovr}(K, \mathcal{I}_n) \max_{\theta \in S^{n-1}} \int_{K \cap \theta^{\perp}}f.
\]
Here $\mathcal{I}_n$ denotes the class of intersection bodies in $\R^n,$ and $d_{\rm ovr}(K,\mathcal{I}_n)$ is the outer volume ratio distance. In the case when $f$ is even, $K$ is origin-symmetric and convex, we may apply John's theorem \cite{John} to conclude that $d_{ovr}(K,\mathcal{I}_n) \leq \sqrt{n}$. 

An isomorphic version of the measure theoretic Busemann-Petty problem from \cite{Zvavitch2} was proved in \cite{KZ}: Given a non-negative, continuous function $f\colon \R^n \to \R_+$, and a pair of origin-symmetric convex bodies $K,L$ in $\R^n$ satisfying $\int_{K \cap \theta^{\perp}} f \leq \int_{L \cap \theta^\perp}f$ for every $\theta \in S^{n-1}$, one has 
\begin{equation} \label{e:isoBPm}
\int_K f \leq \sqrt{n} \int_L f.
\end{equation}
It is still an open problem to determine whether the constant $\sqrt{n}$ is optimal. In \cite{GKZ} the following extension of the inequality \eqref{e:isoBPm} was established: Let $K$ and $L$ be star bodies in $\R^n$ and let $f,g \colon \R^n \to \R_+$ be non-negative, continuous functions on $K$ and $L$, respectively, so that $\|g\|_{\infty} = g(0)=1$. Then 
\begin{equation}\label{e:GKZBPm}
\int_K f \leq \frac{n}{n-1} d_{ovr}(K,\mathcal{I}_n) \max_{\theta \in S^{n-1}} \left( \frac{\int_{K \cap \theta^{\perp}}f}{\int_{L \cap \theta^{\perp}} g}\right) |K|^{\frac{1}{n}} \left(\int_L g\right)^{\frac{n-1}{n}}. 
\end{equation}
For the current state of the Busemann-Petty and slicing problems for functions see the survey \cite{GKZ2}.
\medbreak
We get a slicing inequality for $p > 1$ from Theorem~\ref{t:sphericalAffirmativeCase}. In fact, if the function $g$ is constant with the value
$$g\equiv \frac{1}{|S^{n-2}|}\max_{\xi \in S^{n-1}}\int_{S^{n-1} \cap \xi^{\perp}} f(\theta)d\theta,$$
then $f$ and $g$ satisfy the conditions of Theorem \ref{t:sphericalAffirmativeCase}, and the conclusion reads as follows.
\begin{theorem}\label{t:SphericalSlicing}
Let $f$ be a positive even, continuous function on the sphere $S^{n-1}$ Assume $p > 1$ and if $|x|_2^{-1} f^{p-1}\left(\frac{x}{|x|_2}\right)$ represents a positive definite distribution on $\R^n$, then 
\begin{equation}
\|f\|_{L^{p}(S^{n-1})} \leq \frac{|S^{n-1}|^{\frac{1}{p}}}{|S^{n-2}|} \max_{\xi \in S^{n-1}} Rf(\xi).
\end{equation}

\end{theorem}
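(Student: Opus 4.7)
The plan is to derive the inequality as an immediate specialization of Theorem~\ref{t:sphericalAffirmativeCase}(a), using as the comparison function the constant majorant suggested in the paragraph preceding the statement. Concretely, I would set
\[
c := \frac{1}{|S^{n-2}|}\max_{\xi \in S^{n-1}} Rf(\xi),
\]
and let $g\equiv c$ on $S^{n-1}$. This $g$ is manifestly positive, even, and continuous, so it is a legitimate candidate for the spherical comparison theorem. The nontrivial positive-definiteness hypothesis appearing in part (a) is a condition on $f$ alone, and that is exactly what is already assumed, so no additional verification is required on the side of $g$.

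The pointwise Radon inequality \eqref{comp1} is immediate: for every $\theta\in S^{n-1}$,
\[
Rg(\theta) = \int_{S^{n-1}\cap\theta^{\perp}} c\,d\xi = c\,|S^{n-2}| = \max_{\xi\in S^{n-1}} Rf(\xi) \geq Rf(\theta).
\]
Hence Theorem~\ref{t:sphericalAffirmativeCase}(a) applies and yields $\|f\|_{L^p(S^{n-1})}\leq \|g\|_{L^p(S^{n-1})}$. Since $g$ is constant,
\[
\|g\|_{L^p(S^{n-1})} = c\,|S^{n-1}|^{1/p} = \frac{|S^{n-1}|^{1/p}}{|S^{n-2}|}\max_{\xi\in S^{n-1}} Rf(\xi),
\]
which is precisely the asserted bound.

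There is no real obstacle here: once Theorem~\ref{t:sphericalAffirmativeCase}(a) is available, the slicing inequality is a one-line computation with the constant comparison function. The only conceptual point worth flagging is why it suffices to compare against a constant, namely that the positivity-type hypothesis of Theorem~\ref{t:sphericalAffirmativeCase}(a) falls entirely on the smaller function $f$ for $p>1$, so enlarging $g$ to a constant does not impose any new condition to check.
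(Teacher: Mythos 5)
Your proposal is correct and is exactly the paper's argument: the paper also takes $g\equiv \frac{1}{|S^{n-2}|}\max_{\xi\in S^{n-1}}Rf(\xi)$, notes that $Rf\le Rg$ and that the positive-definiteness hypothesis of Theorem~\ref{t:sphericalAffirmativeCase}(a) concerns only $f$, and then reads off the bound from $\|g\|_{L^p(S^{n-1})}=\frac{|S^{n-1}|^{1/p}}{|S^{n-2}|}\max_{\xi\in S^{n-1}}Rf(\xi)$. Nothing further is needed.
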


Similarly, in the case $0 < p <1$, by choosing the function
\[
f = \frac{1}{|S^{n-2}|}\min_{\theta \in S^{n-1}} \int_{S^{n-1} \cap \xi^{\perp}}g(\theta) d\theta
\]
in Theorem~\ref{t:sphericalAffirmativeCase}, we obtain:

\begin{theorem}
Let $g$ be a positive even, continuous function on the sphere $S^{n-1}$ Assume $0<p < 1$ and if $|x|_2^{-1} g^{p-1}\left(\frac{x}{|x|_2}\right)$ represents a positive definite distribution on $\R^n$, then 
\[
\|g\|_{L^{p}(S^{n-1})} \geq \frac{|S^{n-1}|^{\frac{1}{p}}}{|S^{n-2}|} \min_{\xi \in S^{n-1}} Rg(\xi).
\]
\end{theorem}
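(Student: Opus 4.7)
The plan is to apply Theorem~\ref{t:sphericalAffirmativeCase}(b) with $f$ chosen as a suitable constant function, mirroring the derivation of Theorem~\ref{t:SphericalSlicing} from Theorem~\ref{t:sphericalAffirmativeCase}(a). Specifically, I would set
\[
c := \frac{1}{|S^{n-2}|}\min_{\xi \in S^{n-1}} Rg(\xi)
\]
and take $f \equiv c$ as a constant function on $S^{n-1}$. Since $g$ is strictly positive and continuous on the compact set $S^{n-1}$, the spherical Radon transform $Rg$ is continuous and strictly positive, so $c>0$ and $f$ is a legitimate even, continuous, positive function on $S^{n-1}$.

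Next I would verify the hypotheses of Theorem~\ref{t:sphericalAffirmativeCase}(b). The positive-definiteness assumption on $|x|_2^{-1}g^{p-1}(x/|x|_2)$ is exactly the one imposed on $g$ in the present theorem, so it transfers verbatim. For the comparison of spherical Radon transforms, observe that for every $\xi \in S^{n-1}$
\[
Rf(\xi) = c\int_{S^{n-1}\cap\xi^\perp} d\theta = c\,|S^{n-2}| = \min_{\eta \in S^{n-1}} Rg(\eta) \leq Rg(\xi),
\]
so (\ref{comp1}) holds. Hence Theorem~\ref{t:sphericalAffirmativeCase}(b) applies and yields the inequality $\|f\|_{L^p(S^{n-1})} \leq \|g\|_{L^p(S^{n-1})}$.

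To conclude, I would compute the left-hand side: since $f$ is constant,
\[
\|f\|_{L^p(S^{n-1})} = \left(\int_{S^{n-1}} c^p\,d\theta\right)^{1/p} = c\,|S^{n-1}|^{1/p} = \frac{|S^{n-1}|^{1/p}}{|S^{n-2}|}\min_{\xi \in S^{n-1}} Rg(\xi),
\]
which combined with the previous inequality gives exactly the desired bound. There is no genuine obstacle here, as the theorem is a direct corollary of Theorem~\ref{t:sphericalAffirmativeCase}(b); the only minor points are confirming that the constant $c$ is strictly positive (which uses compactness of $S^{n-1}$ and strict positivity of $Rg$) and that the positive-definiteness hypothesis in Theorem~\ref{t:sphericalAffirmativeCase}(b) is stated as a condition on $g$ rather than $f$, so it carries over unchanged.
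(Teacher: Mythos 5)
Your proposal is correct and follows exactly the route the paper takes: choose $f$ to be the constant $\frac{1}{|S^{n-2}|}\min_{\xi\in S^{n-1}}Rg(\xi)$, note that then $Rf\le Rg$ pointwise while the positive-definiteness hypothesis of Theorem~\ref{t:sphericalAffirmativeCase}(b) is already imposed on $g$, and compute $\|f\|_{L^p(S^{n-1})}$ explicitly. No discrepancies with the paper's argument.
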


\medbreak
As proved in \cite{Kold5}, an origin-symmetric star body $K \subset \R^n$ is an intersection body if, and only if, $\|\cdot\|_K^{-1}$ represents a positive definite distribution on $\R^n$. Therefore, a positive continuous function $f$ on the sphere has the property that the distribution $f^{p-1}\cdot r^{-1}$
is positive definite if, and only if, $f=\|\cdot\|_K^{-\frac{1}{p-1}}$ for some intersection body $K.$ Combining this observation with Theorem \ref{t:SphericalSlicing},
we get that for any intersection body $K$ in $\R^n$ and any $p>1$
\begin{equation}\label{p-slicing}
\left( \int_{S^{n-1}} \|x\|_K^{-\frac{p}{p-1}} dx \right)^{\frac 1{p}} \le \frac{|S^{n-1}|^{\frac{1}{p}}}{|S^{n-2}|} 
\max_{\xi \in S^{n-1}}\left(\int_{S^{n-1} \cap \xi^{\perp}} \|x\|_K^{-\frac{1}{p-1}} dx\right).
\end{equation}
When $p=\frac n{n-1},$ the latter inequality turns into Bourgain's slicing inequality for intersection bodies. It would be interesting to see whether inequality (\ref{p-slicing}) holds
for other classes of bodies, maybe with a different constant. Note that the class of intersection bodies contains ellipsoids,  unit balls of finite-dimensional
subspaces of $L^p$ with $0<p\le 2,$ among others; see \cite[Chapter 4]{Kold1}.

We would like to point out that the inequality of Theorem \ref{t:SphericalSlicing} goes in the opposite direction to the well-known 
$L^p$-$L^q$-estimates for the Radon transform; see \cite{Christ, Christ2,Rubin} for a historical recount of such results. 
The first result of this kind was established by Oberlin and Stein in \cite{OS}: Given any function $f \colon \R^n \to \R$ belonging to $L^p(\R^n)$, one has that 
\begin{equation}\label{e:O-SRadon}
\left( \int_{S^{n-1}} \left(\int_{\R} |\mathcal{R} f(t,\theta)|^r dt\right)^{\frac{q}{r}}d\theta\right)^{\frac{1}{q}} \leq C_{n,p,q} \|f\|_{L^p(\R^n)},
\end{equation}
if, and only if, $1 \leq p < \frac{n}{n-1},$ $q \leq p'$ ($p^{-1} + p'^{-1} =1$), and $\frac{1}{r} = \frac{n}{p} - n +1$. 
In particular, inequality (\ref{e:O-SRadon}) implies that $\mathcal{R}f$ is finite almost everywhere on $\R \times S^{n-1}$ provided $f \in L^p(\R^n)$ for some $1 \leq p < \frac{n}{n-1}$.

It was also proved in \cite{OS} that for every $n \geq 3$ one has
\begin{equation}\label{e:O-SRadon2}
\left(\int_{S^{n-1}} \sup_{t \in \R}|\mathcal{R}f(t,\theta)|^s d\theta\right)^{\frac{1}{s}} \leq C_{p_1,p_2,s} \|f\|_{L^{p_1}(\R^n)}^{\alpha} \|f\|_{L^{p_2}(\R^n)}^{1-\alpha}
\end{equation}
whenever $s \leq n$, $1 \leq p_1 < \frac{n}{n-1} < p_2 \leq \infty$, and 
\[
\frac{\alpha}{p_1}+\frac{1-\alpha}{p_2} = \frac{n-1}{n}.
\]

Of particular interest to us is the limiting case of \eqref{e:O-SRadon2} due to its geometric content: If $\chi_A$ is the characteristic function of a measurable set in $A \subset \R^n$ and $s = n$, then as $p \to \frac{n}{n-1}$, inequality \eqref{e:O-SRadon2} becomes 
\begin{equation}\label{e:BusemannInt}
\left(\int_{S^{n-1}} \left(\sup_{t \in \R}|A \cap (\theta^{\perp} + t\theta)|\right)^n \right)^{\frac{1}{n}} \leq C_n |A|^{\frac{1}{n}}.
\end{equation}
If $A$ is an origin-symmetric convex body in $\R^n,$ by the Brunn concavity principle (see \cite{Gardner,Grinberg}) the supremum is achieved at $t=0,$ and one gets the Busemann intersection inequality. This connection was first observed by Lutwak in \cite{Lutwak2}.

The paper is organized as follows.  Section~\ref{s:prelims} details notations and concepts we need from harmonic analysis and convex geometry. In Section~\ref{s:sphericalRadonSolution}, we give a detailed solution to Problem~\ref{p:SphericalComparison}. In Section~\ref{s:functionoffunction}, we introduce, as an intuitive step, the notion of an intersection function of a given function, provide several examples, and prove a characterization theorem for this class of functions. Section~\ref{s:intersection functions} is dedicated to the introduction of the notion of intersection functions, those which serve as a natural extension of intersection bodies. In Section~\ref{s:Radonaffirmative} we prove Theorem~\ref{t:L2BP} and Theorem~\ref{t:RadonComparisonNegative}.

\section{Preliminaries} \label{s:prelims}

In this section we will recall several facts from harmonic analysis and convex geometry that will be used throughout the paper. 

\subsection{Notions from harmonic analysis}
We will work in the $n$-dimensional Euclidean space $\R^n$ equipped with its usual inner product structure $\langle\cdot,\cdot\rangle$ and induced normed $|\cdot|_2 = \sqrt{\langle\cdot,\cdot\rangle}$. We denote the Lebesgue measure of a measurable subset $A$ of $\R^n$ of appropriate dimension by $|A|$. The $n$-dimensional Euclidean unit ball shall be denoted by $B_2^n$, and its boundary, the unit sphere, by $S^{n-1}$.  For any fixed unit vector $\theta \in S^{n-1}$ we denote by $\theta^{\perp}$ the orthogonal complement of $\{\theta\}$, that is, $\theta^{\perp} = \{x \in \R^n \colon (x,\theta) = 0\}$. More generally, for any fixed $\theta \in S^{n-1}$ and $t\in \R$ we set 
\[
\theta^{\perp} + t \theta := \{x \in \R^n \colon \langle x,\theta\rangle = t \}
\]
to be the hyperplane parallel to $\theta^{\perp}$ at distance $t$ from the origin. We will often make use of the notation $\langle x,\theta\rangle = t$ to denote such hyperplanes in our computations below.

Given a measure metric space $(X,d,\mu)$ and $p >0$, we say that a real-valued function $h\colon X \to \R$ belongs to $L^p(X,\mu)$ if 
\[
\int_X |h(x)|^p d\mu(x) <\infty.
\]
We define the $L^p(X)$-norm of a function $h \colon X \to \R$ to be 
\[
\|h\|_{L^p(X)} = \left(\int_{X}|h(x)|^pdx \right)^{\frac{1}{p}}.
\]
We will make frequent use of the following reverse H\"older inequality \cite[pg. 135 Theorem~1]{MPF}: given a measure space $(X,\mu)$, with $\mu(X) \neq 0$, $1 < r < \infty$ and any pair of non-negative functions $h,w \in L^1(X)$, each having a positive integral, one has 
\begin{equation} \label{e:reversHolder}
\|hw\|_{L^1(X)} \geq \|h\|_{L^{\frac{1}{r}}(X)} \|w\|_{L^{-\frac{1}{r-1}}(X)}.
\end{equation}

We define the Fourier transform of a function $\varphi \in L^1(\R^n)$ by 
\[
\mathcal{F}\varphi(\xi) = \hat{\varphi}(\xi) = \int_{\R^n} \varphi(x) e^{-i(x,\xi)} dx, \quad \xi \in \R^n.
\]

For the following notions, we follow the presentation of \cite{Kold1} (see also \cite{GS, Rudin}). By $\mathcal{S}:=\mathcal{S}(\R^n)$ we denote the Schwartz space of rapidly decreasing infinitely differentiable test functions, and by $\mathcal{S}'$ the space of continuous linear functionals (distributions) acting on $\mathcal{S}$. The action of a distribution $f \in \mathcal{S'}$ on a test function $\varphi \in \mathcal{S}$ is given by integration: 
\[
\langle f, \varphi \rangle = \int_{\R^n}f(x)\varphi(x)dx. 
\]
If $\varphi$ is a test function, then so is its Fourier transform $\hat{\varphi}.$ Moreover, the Fourier transform is invertible on $S$ and its inverse is given by
\[
\Tilde{\mathcal{F}} \varphi(\xi) = (2\pi)^{-n}\int_{\R^n}\varphi(x) e^{i(x,\xi)}dx. 
\]
Consequently, for every $\varphi \in \mathcal{S}$, $(\hat{\varphi})^{\wedge}(\xi) = (2\pi)^n \varphi(-\xi)$. Also, the Fourier transform and its inverse are continuous operators on $\mathcal{S}$. By the Fubini theorem, we also have the following Parseval identity: For any pair $\varphi,\psi \in \mathcal{S}$, 
\[
\int_{\R^n} \hat{\varphi}(x) \psi(x)dx = \int_{\R^n} \varphi(\xi) \hat{\psi}(\xi)d\xi. 
\]

With this in mind, we define the Fourier transform of a distribution $f$ as a distribution $\hat{f}$ acting by $\langle \hat{f}, \varphi \rangle = \langle f, \hat{\varphi}\rangle.$

If $\varphi$ is an even test function, then 
\[
(\hat{\varphi})^{\wedge} = (2\pi)^n \varphi \quad \text{ and } \langle f, \hat{\varphi} \rangle = (2\pi)^n \langle f,\varphi\rangle.
\]

We say a distribution $f$ is a positive definite distribution if its Fourier transform is a positive distribution, i.e. for every $\varphi \in \mathcal{S}$ one has $\langle \hat{f}, \varphi) \geq 0$ whenever $\varphi \geq 0.$ We say that a complex-valued function $f$ defined on $\R^n$ is a positive definite function on $\R^n$ if, for every finite sequence $\{x_j\}_1^m$ in $\R^n$ and every choice of complex numbers $\{c_j\}_1^m$, we have 
\[
\sum_{\ell=1}^m\sum_{j=1}^mc_{\ell}\bar{c_j}f(x_{\ell}-x_{j}) \geq 0.
\]

By Bochner theorem, a function on $\R^n$ is positive definite if, and only if, it is the Fourier transform of a finite, positive Borel measure $\mu$ on $\R^n$ (see \cite{GV}). From this it follows that products of positive definite functions are again positive definite functions. 
More generally,  Schwartz's generalization of Bochner's theorem asserts that a distribution is positive definite if, and only if, if is the Fourier transform of a tempered measure on $\R^n.$

The Radon transform of a function $\varphi \in L^1(\R^n)$ that is integrable over every affine hyperplane is defined as 
\[
\mathcal{R} \varphi(t,\theta) = \int_{\theta^{\perp} + t \theta} \varphi(x) dx.
\]
Moreover, we will make frequent use of the relationship between the Fourier transform and Radon transform: Given $\varphi \in L^1(\R^n)$, for every fixed direction $\xi \in S^{n-1}$, one has that the Fourier transform of the map $t \in \R \mapsto \mathcal{R}\varphi(t,\theta)$ is equal to the function $z \in \R \mapsto \hat{\varphi}(z,\xi);$ see for example \cite[Lemma 2.11]{Kold1}.

The spherical Radon transform $R\colon C(S^{n-1}) \to C(S^{n-1})$ is a linear operator defined by 
\[
Rf(\xi) = \int_{S^{n-1} \cap \xi^{\perp}} f(x)dx, \quad \xi \in S^{n-1},
\]
for every function $f \in C(S^{n-1}).$ The spherical Radon transform is self-dual, that is, for any pair $f,g \in C(S^{n-1}),$ one has $\langle Rf,g\rangle = \langle f, Rg \rangle$.  We define the spherical Radon transform of a measure $\mu$ as a function $R\mu$ on the space $C(S^{n-1})$ acting by 
\[
\langle R\mu,f\rangle = \langle \mu, Rf\rangle = \int_{S^{n-1}} Rf(x)d\mu(x). 
\]
 
 Denote by $\mathbb{P}^n$ the space of all affine hyperplanes contained in $\R^n$. Along with the Radon transform, we consider the dual Radon transform $\mathcal{R}^*$ of an even continuous function $g \colon \mathbb{P}^n \to \R$ is defined by 
\[
(\mathcal{R}^*g)(x) = \int_{\{H \in \mathbb{P}^n\colon x \in H\}} g(H) d\nu_{n,n-1}(H),
\]
where $\nu_{n,n-1}$ denotes the rotation invariant Haar probability measure on the compact set $\{H \in \mathbb{P}^n \colon x \in H \}$.  Following \cite{Helgason} one can identify $C(\mathbb{P}^n)$ with the class even functions belonging to $C(\R \times S^{n-1})$. 

For more information on the Radon transform, see the books of Helgason \cite{Helgason,Helgason2}.

\subsection{Notions from Convexity}

We say that a compact subset $K$ of $\R^n$ is a star body if the origin $o$ belongs the the interior of $K$  and if, for every $x \in K$, each point of the interval $[o,x)$ is an interior point of $K$, and the boundary of $K$ is continuous in the sense that the Minkowski function of $K$ defined by 
\[
\|x\|_K = \min\{s \geq 0 \colon x \in sK\}
\]
is a continuous function on $\R^n$. A star body $K$ is called a convex body if in addition it is a convex set. Moreover, any star body $K$ satisfies
\[
K = \{x\in\R^n \colon \|x\|_K \leq 1\}. 
\]
A star body $K$ is said to be origin-symmetric if $K= -K.$

The radial function of a star body $K$ is defined as $\rho_K(\cdot) =\|\cdot\|_K^{-1}$; it is positive and continuous outside of the origin.  For every direction $\theta \in S^{n-1}$, $\rho_K(\theta)$ is the distance from the origin to the boundary of $K$ in the direction of $\theta$. 

Given a measure $\mu$ on $\R^n$ with non-negative continuous density $f$ and a star body $K$ in $\R^n$, we have 
\begin{equation} \label{e:polar formula measure full}
 \mu(K) = \int_{S^{n-1}} \int_0^{\|\theta\|_K^{-1}} r^{n-1}f(r\theta)dr\theta,
\end{equation}
which, in the case of the volume, becomes 
\[
|K| = \frac{1}{n}\int_{S^{n-1}} \|\theta\|_K^{-n}d\theta. 
\]

Given any hyperplane $\xi^{\perp}$, the polar formula for measure of the section $K \cap \xi^{\perp}$ is given by 
\begin{equation} \label{e:polar formula measure}
\begin{split}
\mu(K \cap \xi^{\perp}) &= \int_{S^{n-1} \cap \xi^{\perp}} \int_0^{\|\theta\|_K^{-1}} r^{n-2}f(r\theta)drd\theta \\
&= R\left( \int_{0}^{\|\cdot\|_K^{-1}} r^{n-2} f(r\cdot) dr\right)(\xi),  
\end{split}
\end{equation}
and for the volume: 
\[
|K \cap \xi^{\perp}| = \frac{1}{n-1} R\left( \| \cdot\|_K^{-n+1}\right)(\xi).
\]

If $f$ is a continuous function on $S^{n-1}$ and $0<p<n,$ we denote by $f\cdot r^{-p}$ the extension of $f$ to
an even homogeneous function of degree $-p$ on $\R^n:$
$$f\cdot r^{-p}(x)= |x|_2^{-p} f\left(\frac x{|x|_2}\right) \quad x\in \R^n\setminus \{0\}.$$
Since $0<p<n,$ this function is locally integrable on $\R^n$ and represents a distribution. 
Suppose that $f$ is infinitely smooth, i.e. $f\in C^{\infty}(S^{n-1})$. Then by \cite[Lemma 3.16]{Kold1}, the Fourier transform
in the sense of distributions
$$(f\cdot r^{-p})^\wedge = g\cdot r^{-n+p},$$
for some function $g\in C^{\infty}(S^{n-1})$. When we write $(f\cdot r^{-p})^\wedge(\xi),$ we mean $g(\xi),\ \xi \in S^{n-1}.$
If $f,g$ are infinitely smooth functions on $S^{n-1}$, we have the following spherical version of Parseval's
formula (see \cite[Lemma 3.22]{Kold1}):  for any $p\in (-n,0)$
\begin{equation}\label{parseval}
\int_{S^{n-1}} (f\cdot r^{-p})^\wedge(\xi) (g\cdot r^{-n+p})^\wedge(\xi) =
(2\pi)^n \int_{S^{n-1}} f(\theta) g(\theta)\ d\theta.
\end{equation}
Suppose $f$ is a continuous function on $S^{n-1}$. The Fourier transform of $f\cdot r^{-n+1}$ is a continuous function on the sphere. More precisely, by \cite[Lemma 3.7]{Kold1},
\begin{equation}\label{section-fourier}
(f\cdot r^{-n+1})^\wedge= \pi Rf\cdot r^{-1}.
\end{equation}
We will also use a non-smooth version of Parseval's formula from \cite[Corollary 3.23]{Kold1}.
If $g\in C(S^{n-1})$ and $g\cdot r^{-1}$ is a positive definite distribution, then there exists a finite Borel measure
$\mu_g$ on $S^{n-1}$ so that for every $f\in C(S^{n-1})$
\begin{equation}\label{parseval1}
\int_{S^{n-1}} (f\cdot r^{-n+1})^\wedge(\xi) d\mu_g(\xi) =
(2\pi)^n \int_{S^{n-1}} f(\theta) g(\theta) d\theta.
\end{equation}

Note that (\ref{parseval}) and (\ref{parseval1}) are formulated in \cite{Kold1} specifically for Minkowski functionals, not functions. However,
the Minkowski functional of a star body is an arbitrary continuous positive function on $S^{n-1}.$

The class of intersection bodies of star bodies was introduced by Lutwak in \cite{Lutwak} during his investigation of the Busemann-Petty problem.
We say that a star body $K$ is an {\it intersection body of a star body} $L$ if, for every direction $\theta \in S^{n-1}$, one has 
\[
\|\xi\|_K^{-1} = |L \cap \xi^{\perp}| = \frac{1}{n-1} R\left( \| \cdot\|_L^{-n+1}\right)(\xi).
\]

Following \cite{GLW}, we say that a star body $K$ is an {\it intersection body} if there exists a finite, positive Borel measure $\mu$ on the sphere $S^{n-1}$ so that $\|\cdot\|_K^{-1} = R\mu$ as functionals on $C(S^{n-1})$; that is, for every continuous functions $f$ on $S^{n-1}$, 
\begin{equation} \label{e: intersection body general}
\int_{S^{n-1}} \|\theta\|_K^{-1}f(\theta)d\theta = \int_{S^{n-1}} Rf(x) d\mu(x).     
\end{equation}
We denote by $\mathcal{I}_n$ the class of intersection bodies in $\R^n$; it is immediately clear that $\mathcal{I}_n$ contains the class of intersection bodies of star bodies. 

In \cite{Kold5} it was proved that an origin-symmetric star body $K$ in $\R^n$ is an intersection body if, and only if, $\|\cdot\|_K^{-1}$ is a positive definite distribution on $\R^n$.  This result was used in the solution of the Busemann-Petty problem in \cite{GKS}.

\section{The spherical case}\label{s:sphericalRadonSolution}

In this section prove Theorems~\ref{t:sphericalAffirmativeCase} and \ref{t:sphericalNegativeCase}. Begin by recalling the statement of Theorem~\ref{t:sphericalAffirmativeCase}:
\begin{reptheorem}{t:sphericalAffirmativeCase} Let $f,g$ be even continuous positive functions on the sphere $S^{n-1}$, and suppose that
\[Rf(\theta)\le Rg(\theta),\qquad \text{for all } \theta \in S^{n-1}.
\]
Then:
\begin{itemize}
\item[(a)] Suppose that for some $p>1$ the function $|x|_2^{-1}f^{p-1}\left(\frac{x}{|x|_2}\right)$ represents a positive definite distribution on $\R^n$. 
Then $\|f\|_{L^{p}(S^{n-1})} \leq \|g\|_{L^{p}(S^{n-1})}.$
\item[(b)] Suppose that for some $0 < p <1$ the function $|x|_2^{-1}g^{p-1}\left(\frac{x}{|x|_2}\right)$ represents a positive definite distribution on $\R^n$.  Then $\|f\|_{L^p(S^{n-1})} \leq \|g\|_{L^p(S^{n-1})}$.
\end{itemize}
\end{reptheorem}

\begin{proof} We begin with the proof of part (a). By (\ref{section-fourier}), the inequality $Rf\le Rg$ can be written as
\begin{equation}\label{e:FourierSphere}
(f\cdot r^{-n+1})^\wedge (\theta)\le (g\cdot r^{-n+1})^\wedge (\theta), \qquad \forall \theta\in S^{n-1}.
\end{equation}
Integrating both sides of the latter inequality over $S^{n-1}$ with respect to the non-negative measure $\mu_{f^{p-1}}$ corresponding to the 
positive definite distribution $f^{p-1}\cdot r^{-1}$ by (\ref{parseval1}), we get
$$\int_{S^{n-1}} (f\cdot r^{-n+1})^\wedge(\theta) d\mu_{f^{p-1}}(\theta) \le
\int_{S^{n-1}} (g\cdot r^{-n+1})^\wedge(\theta) d\mu_{f^{p-1}}(\theta).$$
Applying  Parseval's identity (\ref{parseval1}) we get
$$\int_{S^{n-1}} f^{p}(\theta)\ d\theta \le \int_{S^{n-1}} f^{p-1}(\theta) g(\theta)\ d\theta, 
$$
and using H\"older's inequality we get
$$\int_{S^{n-1}} f^{p}(\theta)\ d\theta \le \int_{S^{n-1}} f^{p-1}(\theta) g(\theta)\ d\theta \le \left(\int_{S^{n-1}} f^{p}(\theta) d\theta \right)^{\frac{p-1}{p}}
\left(\int_{S^{n-1}} g^{p}(\theta) d\theta \right)^{\frac 1{p}},$$
and the result follows, which completes the proof of part (a). 

The proof of (b) is more or less identical to the proof of part (a), except with the use of H\"older's inequality below replaced with its reverse \eqref{e:reversHolder} for $0 <p <1$.

Integrating both sides of the inequality \eqref{e:FourierSphere} over $S^{n-1}$ with respect to the non-negative measure $\mu_{g^{p-1}}$ corresponding to the 
positive definite distribution $g^{p-1}\cdot r^{-1}$ by (\ref{parseval1}), and
applying  Parseval's identity (\ref{parseval1}) we get
$$\int_{S^{n-1}} f(\theta)g^{p-1}(\theta) d\theta \le \int_{S^{n-1}} g^p(\theta)\ d\theta, 
$$
and using the reverse H\"older's inequality \eqref{e:reversHolder} with $X = S^{n-1}$, $d\mu = d\theta$, $h=f$, $w=g^{p-1}$ and $r=1/p$, we get
$$\int_{S^{n-1}} g^{p}(\theta)\ d\theta \geq \int_{S^{n-1}} f(\theta) g^{p-1}(\theta)\ d\theta \geq \left(\int_{S^{n-1}} f^{p}(\theta) d\theta \right)^{\frac{1}{p}}
\left(\int_{S^{n-1}} g^{p}(\theta) d\theta \right)^{\frac{p-1}{p}},$$
and the result follows, which completes the proof of part (b).

\end{proof}

\medbreak

To conclude this section, we provide a proof of Theorem~\ref{t:sphericalNegativeCase}. We restate it here for convenience of the reader. 

\begin{reptheorem}{t:sphericalNegativeCase} The following hold true: 
\begin{itemize}
    \item[(a)] Let $g$ be an infinitely smooth strictly positive even function on $S^{n-1}$ and $p>1$. Suppose that the distribution $|x|_2^{-1}g^{p-1}\left(\frac{x}{|x|_2}\right)$ is not  positive definite on $\R^n$. Then there exists an infinitely smooth even function $f$ on $S^{n-1}$ so that the condition (\ref{comp1}) holds, but $\|f\|_{L^p(S^{n-1})} > \|g\|_{L^p(S^{n-1})}$.

    \item[(b)] Let $f$ be an infinitely smooth strictly positive even function on $S^{n-1}$ and $0 < p <1$. Suppose that the distribution $|x|_2^{-1}f^{p-1}\left(\frac{x}{|x|_2}\right)$ is not  positive definite on $\R^n$. Then there exists an infinitely smooth even function $g$ on $S^{n-1}$ so that the condition (\ref{comp1}) holds, but $\|f\|_{L^p(S^{n-1})} > \|g\|_{L^p(S^{n-1})}.$
\end{itemize}
\end{reptheorem}

\begin{proof} We begin with the proof of part (a). 
Since $g$ is infinitely differentiable and positive, the Fourier transform of $g^{p-1}\cdot r^{-1}$ is of the form $h\cdot r^{-n+1}$ where $h$ is an even infinitely differentiable function on the sphere; see \cite[Lemma 3.16]{Kold1}. This function is negative on some open symmetric set $\Omega$. Choose a function $\psi\in C^\infty(S^{n-1})$ so that $\psi\ge 0$ everywhere on $S^{n-1}$ and $\psi>0$ only on some non-empty open subset of $\Omega.$ The Fourier transform of $\psi\cdot r^{-1}$ is a function $\varphi \cdot r^{-n+1}$ where $\varphi\in C^\infty(S^{n-1})$, again by \cite[Lemma 3.16]{Kold1}. Then $(\varphi \cdot r^{-n+1})^\wedge= (2\pi)^n \psi \cdot r^{-1}.$

Define the function $f$ on $S^{n-1}$ by
$$f(\theta)= g(\theta) - \e \varphi(\theta),\qquad \forall \theta \in S^{n-1},$$
where $\e$ is small enough so that $f>0$ on the sphere. By extending the functions $f,g,\varphi$ to $\R^n$ homogeneously of degree $-n+1$, we then have
$$(f\cdot r^{-n+1})^\wedge = (g\cdot r^{-n+1})^\wedge - (2\pi)^n \psi\cdot r^{-1}.$$
Since $\psi$ is non-negative everywhere on the sphere, by (\ref{section-fourier}) we conclude that the functions $f$ and $g$ satisfy the condition (\ref{comp1}).
Multiplying the latter equality by $(g^{p-1}\cdot r^{-1})^\wedge$ and integrating over the sphere we get
$$\int_{S^{n-1}} (f\cdot r^{-n+1})^\wedge(\theta) (g^{p-1}\cdot r^{-1})^\wedge(\theta)\ d\theta$$ 
$$=\int_{S^{n-1}} (g\cdot r^{-n+1})^\wedge(\theta) (g^{p-1}\cdot r^{-1})^\wedge(\theta)\ d\theta - \e(2\pi)^n\int_{S^{n-1}}  \psi(\theta)(g^{p-1}\cdot r^{-1})^\wedge(\theta)\ d\theta.$$
Parseval's formula (\ref{parseval})  implies that
$$\int_{S^{n-1}} f(\theta)g^{p-1}(\theta)\ d\theta = \int_{S^{n-1}} g^{p}(\theta)\ d\theta - \e
(2\pi)^n \int_{S^{n-1}} \psi(\theta)(g^{p-1}\cdot r^{-1})^\wedge(\theta)\ d\theta.$$
Since $\psi$ can be positive only where $(g^{p-1}\cdot r^{-1})^\wedge$ is negative, we get 
$$\int_{S^{n-1}} g^{p}(\theta)\ d\theta < \int_{S^{n-1}} f(\theta)g^{p-1}(\theta)\ d\theta,$$
and by H\"older's inequality
$$\int_{S^{n-1}} g^{p}(\theta)\ d\theta < \int_{S^{n-1}} f^{p}(\theta)\ d\theta,$$ which completes the proof of part (a).

Now we move to the proof of (b). Since $f$ is infinitely differentiable and positive, the Fourier transform of $f^{p-1}\cdot r^{-1}$ is of the form $h\cdot r^{-n+1}$ where $h$ is an even infinitely differentiable function on the sphere; see \cite[Lemma 3.16]{Kold1}. This function is negative on some open symmetric set $\Omega$ in the sphere. Choose a function $\psi\in C^\infty(S^{n-1})$ so that $\psi\ge 0$ everywhere on $S^{n-1}$ and $\psi>0$ only on some non-empty open subset of $\Omega.$ The Fourier transform of $\psi\cdot r^{-1}$ is a function $\varphi \cdot r^{-n+1}$ where $\varphi\in C^\infty(S^{n-1})$, again by \cite[Lemma 3.16]{Kold1}. Then $(\varphi \cdot r^{-n+1})^\wedge= (2\pi)^n \psi \cdot r^{-1}.$

Define the function $g$ on $S^{n-1}$ by
$$g(\theta)= f(\theta) + \e \varphi(\theta),\qquad \forall \theta \in S^{n-1},$$
where $\e$ is small enough so that $g>0$ on the sphere. By extending the functions $f,g,\varphi$ to $\R^n$ homogeneously of degree $-n+1$, we then have
$$(g\cdot r^{-n+1})^\wedge = (f\cdot r^{-n+1})^\wedge + (2\pi)^n \psi\cdot r^{-1}.$$
Since $\psi$ is non-negative everywhere on the sphere, by (\ref{section-fourier}) we conclude that the functions $f$ and $g$ satisfy the condition (\ref{comp1}).
Multiplying the latter equality by $(f^{p-1}\cdot r^{-1})^\wedge$ and integrating over the sphere we get
$$\int_{S^{n-1}} (g\cdot r^{-n+1})^\wedge(\theta) (f^{p-1}\cdot r^{-1})^\wedge(\theta)\ d\theta$$ 
$$=\int_{S^{n-1}} (f\cdot r^{-n+1})^\wedge(\theta) (f^{p-1}\cdot r^{-1})^\wedge(\theta)\ d\theta + \e(2\pi)^n\int_{S^{n-1}}  \psi(\theta)(f^{p-1}\cdot r^{-1})^\wedge(\theta)\ d\theta.$$
Parseval's formula (\ref{parseval})  implies that
$$\int_{S^{n-1}} g(\theta)f^{p-1}(\theta)\ d\theta = \int_{S^{n-1}} f^{p}(\theta)\ d\theta + \e
(2\pi)^n \int_{S^{n-1}} \psi(\theta)(f^{p-1}\cdot r^{-1})^\wedge(\theta)\ d\theta.$$
Since $\psi$ can be positive only where $(f^{p-1}\cdot r^{-1})^\wedge$ is negative, we get 
$$ \int_{S^{n-1}} g(\theta)f^{p-1}(\theta)\ d\theta < \int_{S^{n-1}} f^{p}(\theta)\ d\theta $$
and by the reverse H\"older's inequality \eqref{e:reversHolder} we get
$$\int_{S^{n-1}} g^{p}(\theta)\ d\theta < \int_{S^{n-1}} f^{p}(\theta)\ d\theta.$$
 \end{proof}
 
 Note that the condition of Theorem \ref{t:sphericalNegativeCase} that $g$ is infinitely smooth can be removed using the approximation argument
 of \cite[Lemma 4.10]{Kold1}, but then $g$ needs to be perturbed twice to construct a counterexample.
 
\section{The intersection function of a function} \label{s:functionoffunction}

The concept of an intersection body in two steps \cite{GLW,Lutwak}. First, he gave a geometrically clear definition of an intersection body of
a star body, and then replaced star bodies by measures to define the general concept of an intersection body. We do it in a similar way for intersection functions. First, we introduce the intersection function of a positive function.

Denote by $\mathcal{L}^n$ the class of positive, continuous, integrable, and even in the first variable  functions  on $\R\times S^{n-1}.$

\begin{definition} \label{d:intfunctionoffunctions} Given $g\in \mathcal{L}^n,$  
we say that a function $f$ on $\R^n$ is an intersection function of $g$ if, for any Schwartz test function $\varphi \in \mathcal{S}(\R^n)$,
\begin{equation}\label{non-central}
 \int_{\R^n} \varphi(x) f(x)\ dx = \int_{S^{n-1}}\int_{\R} \mathcal{R}\varphi(t,\theta) g(t,\theta)\ dt\ d\theta.
\end{equation} 
\end{definition}
Essentially, this means that $f=R^{*}g$ is the dual Radon transform of a positive function $g;$ see \cite[p.3]{Helgason2}. 
The existence of an intersection function is guaranteed by the well-known formula for the dual Radon transform.

\begin{proposition}\label{t:radon} Let $g\in \mathcal{L}^n,$ then the function $f \colon \R^n \to \R_+$ defined by 
\[
f(x) = \int_{S^{n-1}} g(\langle x,\theta\rangle,\theta) d\theta
\]
is an intersection function of $g$.
\end{proposition}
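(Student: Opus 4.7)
The plan is to verify the identity (\ref{non-central}) by direct computation, using Fubini's theorem and the foliation of $\R^n$ by hyperplanes orthogonal to a given direction $\theta$. Concretely, I would start from the left-hand side of (\ref{non-central}), substitute the definition $f(x) = \int_{S^{n-1}} g(\langle x,\theta\rangle,\theta)\, d\theta$, and swap the order of the $x$ and $\theta$ integrations.

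Once the $\theta$-integral is on the outside, for each fixed $\theta\in S^{n-1}$ I would decompose $\R^n$ as the disjoint union of the affine hyperplanes $\{x\in\R^n:\langle x,\theta\rangle=t\}$ for $t\in\R$, so that
\[
\int_{\R^n} \varphi(x)\, g(\langle x,\theta\rangle,\theta)\, dx = \int_{\R} g(t,\theta) \left(\int_{\langle x,\theta\rangle=t} \varphi(x)\, dx\right) dt = \int_{\R} g(t,\theta)\, \mathcal{R}\varphi(t,\theta)\, dt.
\]
Integrating this over $S^{n-1}$ would yield precisely the right-hand side of (\ref{non-central}).

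The only real technical point is justifying the interchange of the order of integration. Here I would use that $g\in\mathcal{L}^n$ is nonnegative and belongs to $L^1(\R\times S^{n-1})$, while $\varphi\in\mathcal{S}(\R^n)$ implies that $\mathcal{R}|\varphi|$ is a bounded (in fact Schwartz in $t$) function on $\R\times S^{n-1}$. Thus
\[
\int_{S^{n-1}}\int_{\R} g(t,\theta)\, \mathcal{R}|\varphi|(t,\theta)\, dt\, d\theta \le \|\mathcal{R}|\varphi|\|_{\infty} \int_{S^{n-1}}\int_{\R} g(t,\theta)\, dt\, d\theta < \infty,
\]
which by the hyperplane-foliation identity applied to $|\varphi|$ in place of $\varphi$ equals $\int_{\R^n}|\varphi(x)|\, f(x)\, dx$. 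This simultaneously shows that $f$ is locally integrable, that the iterated integral defining the right-hand side of (\ref{non-central}) is absolutely convergent, and that Fubini's theorem applies, allowing the swap to be carried out rigorously. The argument is therefore essentially bookkeeping once the hyperplane decomposition of Lebesgue measure on $\R^n$ and the definition of $\mathcal{R}\varphi$ are invoked; no substantial obstacle is anticipated.
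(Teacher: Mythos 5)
Your proof is correct and follows essentially the same route as the paper: substitute the definition of $f$, apply Fubini to swap the $x$- and $\theta$-integrations, and then foliate $\R^n$ by the hyperplanes $\langle x,\theta\rangle=t$ to recognize the inner integral as $\mathcal{R}\varphi(t,\theta)$. The paper invokes Fubini without comment, whereas you supply the absolute-convergence justification using $g\in L^1(\R\times S^{n-1})$ and the boundedness of $\mathcal{R}|\varphi|$; that is a welcome but inessential addition.
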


\begin{proof} By Fubini's theorem, we have that 
\begin{align*}
\langle f, \varphi\rangle &= \int_{\R^n}f(x)\varphi(x)\ dx \\&=\int_{\R^n}\left(\int_{S^{n-1}}g(\langle x,\theta\rangle,\theta )d\theta \right) \varphi(x)\ dx\\
&= \int_{S^{n-1}} \int_{\R^n} \varphi(x) g(\langle x,\theta\rangle),\theta)\ dx\ d\theta\\
&= \int_{S^{n-1}} \int_{\R}\left(\int_{\theta^{\perp}+t\theta}\varphi(x) dx \right)g(t,\theta)\ dt\ d\theta\\
&= \int_{S^{n-1}} \int_{\R} \mathcal{R}\varphi(t,\theta) g(t,\theta)\ dt\ d\theta,
\end{align*}
whenever $\varphi$ is a Schwartz test function on $\R^n$.  This means that the function $f$, as defined above, satisfies the condition \eqref{non-central} for any Schwartz test function on $\R^n$, and so it must be an intersection function of the function $g$. 
\end{proof}

This simple formula is not very effective if one wants to know weather a given function $f$ is an intersection function of a function.
As it was done in the case of intersection bodies, we establish the Fourier characterization of intersection functions which works better for
our purposes. 

\begin{proposition} \label{t:fourier} Let $g\in \mathcal{L}^n.$ A function $f$ on $\R^n$ is an intersection function of $g$ if, and only if,
$$f =\frac 1{\pi}\left( |x|_2^{-n+1}\left(g\left(t,\frac{x}{|x|_2}\right)\right)^\wedge_t(|x|_2)\right)^\wedge_x,$$
where the interior Fourier transform is taken with respect to $t\in \R,$ and the exterior Fourier
transform is with respect to $x\in \R^n$. 
\end{proposition}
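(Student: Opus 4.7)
My plan is to start from the defining relation \eqref{non-central} and, through a short chain of Fourier-analytic transformations, rewrite its right-hand side in the form $\frac{1}{\pi}\int_{\R^n}\varphi(x)\,\hat{G}(x)\,dx$, where
\[
G(y) = |y|_2^{-n+1}\,\bigl(g(t,y/|y|_2)\bigr)^{\wedge}_t(|y|_2).
\]
Since the resulting identity must hold for every Schwartz $\varphi$, comparing with $\int\varphi f$ forces $f = \frac{1}{\pi}\hat{G}$ in the sense of distributions, which is exactly the asserted formula. The converse direction is then obtained by reading the same chain of equalities in reverse. The four ingredients I will use are: the Fourier slice theorem, which gives $\widehat{\mathcal{R}\varphi(\cdot,\theta)}(z) = \hat{\varphi}(z\theta)$; one-dimensional Parseval in the $t$-variable; a spherical-to-Cartesian change of variables in $\R^n$; and the tempered-distribution duality $\langle \hat{\varphi},G\rangle = \langle \varphi, \hat{G}\rangle$.

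Concretely, I will first fix $\theta \in S^{n-1}$ and apply one-dimensional Parseval to the inner integral. Since $\varphi$ is even and $g$ is even in its first variable, both $\mathcal{R}\varphi(\cdot,\theta)$ and $g(\cdot,\theta)$ are real and even, so their one-dimensional Fourier transforms are real and even as well, and Parseval yields
\[
\int_{\R}\mathcal{R}\varphi(t,\theta)\,g(t,\theta)\,dt = \frac{1}{2\pi}\int_{\R}\hat{\varphi}(z\theta)\,\bigl(g(t,\theta)\bigr)^{\wedge}_t(z)\,dz.
\]
Next I will integrate over $S^{n-1}$, use evenness in $z$ to restrict to $z>0$ at the cost of a factor of $2$, and fold $(z,\theta) \in (0,\infty)\times S^{n-1}$ into a Cartesian variable $y = z\theta \in \R^n$ via $dy = z^{n-1}\,dz\,d\theta$. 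The Jacobian $z^{n-1}$ is absorbed by the factor $|y|_2^{-n+1}$ sitting in $G$, leaving no residual power of $|y|_2$, and the prefactors $\frac{1}{2\pi}$ and $2$ collapse to $\frac{1}{\pi}$. The right-hand side then reads $\frac{1}{\pi}\int_{\R^n}\hat{\varphi}(y)\,G(y)\,dy$, and one application of duality converts this to $\frac{1}{\pi}\int_{\R^n}\varphi(x)\,\hat{G}(x)\,dx$, closing the argument.

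The main obstacle I expect is justifying that every step is a bona-fide identity rather than a purely formal manipulation, because $G$ is not a Schwartz function. Its singularity at the origin is of order $|y|_2^{-n+1}$, which is locally integrable on $\R^n$, while along each ray one has $G(z\theta) = z^{-n+1}\bigl(g(\cdot,\theta)\bigr)^{\wedge}(z)$, bounded in modulus by $z^{-n+1}\|g(\cdot,\theta)\|_{L^1(\R)}$ and therefore decaying at infinity when $n \geq 2$. The hypothesis $g \in \mathcal{L}^n$ gives $\int_{S^{n-1}}\|g(\cdot,\theta)\|_{L^1(\R)}\,d\theta < \infty$, so $G$ is locally integrable and bounded at infinity, hence defines a tempered distribution for which the duality $\langle G,\hat{\varphi}\rangle = \langle \hat{G},\varphi\rangle$ holds in the usual sense, and for which the applications of Fubini used above are legitimate. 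Once these integrability checks are in place, the chain of equalities closes cleanly and yields the equivalence in both directions.
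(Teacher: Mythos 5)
Your argument is correct and follows essentially the same route as the paper's: Fourier slice theorem to convert $\mathcal{R}\varphi$ to $\hat\varphi$ on a line, one-dimensional Parseval in the $t$-variable, and the polar-to-Cartesian change of variables absorbing the Jacobian $z^{n-1}$ into the $|y|_2^{-n+1}$ prefactor. The only cosmetic difference is the choice of test function: the paper feeds $\hat\varphi$ into the defining relation and reads off $\hat f$ directly via the identity $\int \hat h_1 h_2 = \int h_1 \hat h_2$, whereas you keep $\varphi$, use the Plancherel form $\int h_1 h_2 = \tfrac{1}{2\pi}\int\hat h_1 \hat h_2$, and perform one extra duality step at the end. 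Your added discussion of why $G$ is a tempered distribution is a welcome bit of rigor that the paper omits; you might also state explicitly that restricting to even $\varphi$ loses no generality (both sides of \eqref{non-central} vanish for odd $\varphi$ when $f$ is even and $g$ is even in $t$), but this is standard.
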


\begin{proof}
Note that for fixed $\theta \in S^{n-1}$ the function $t \in \R\to \mathcal{R} \hat{\varphi}(t,\theta)$ is the Fourier transform
of the function $z \in \R \to (2\pi)^{n-1}{\varphi}(z\theta).$ Therefore, for any test function $\varphi,$ applying Parseval to the inner integral by $dt$, we get
\begin{align*}
\langle \hat{f},\varphi \rangle &= \int_{\R^n} f(x)\hat{\varphi}(x)\ dx= \int_{S^{n-1}}\int_{\R} \mathcal{R}\hat{\varphi}(t,\theta) g(t,\theta)\ dt\ d\theta\\
&=(2\pi)^{n-1}\int_{S^{n-1}}\int_{\R} \varphi(z\theta) (g(t,\theta))^\wedge_t(z)\ dz\ d\theta\\
&=2(2\pi)^{n-1}\Big\langle |x|_2^{-n+1}\left(g\left(t,\frac{x}{|x|_2}\right)\right)^\wedge_t(|x|_2), \varphi(x)\Big\rangle.
\end{align*}
\end{proof}

\bigbreak

 From the Propositions \ref{t:radon} and \ref{t:fourier}, respectively, we have the following corollary.

\begin{corollary}\label{t:fourier-radon} For any $g\in \mathcal{L}^n$, one has 
$$\frac 1{\pi}\left( |x|_2^{-n+1}\left(g\left(t,\frac{x}{|x|_2}\right)\right)^\wedge_t(|x|_2)\right)^\wedge_x(\xi)=
\int_{S^{n-1}} g\left(\langle\xi,\theta\rangle,\theta\right) \ d\theta,\quad \forall \xi\in \R^n.$$
Moreover, for every $r \in \R$ and $\theta \in S^{n-1}$, the following identity holds: 
\begin{equation}\label{e:relation}
 (g(t,\theta))_t^\wedge(r)= |r|^{n-1}\hat{f}(r\theta),
\end{equation}
where $f$ is the intersection function of $g$.
\end{corollary}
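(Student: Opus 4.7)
The plan is to derive both identities by combining the two representations of the intersection function established in Propositions \ref{t:radon} and \ref{t:fourier}. For the first identity, the strategy is to observe that both sides are intersection functions of the same function $g$, and hence must coincide. Concretely, Proposition \ref{t:radon} exhibits $\xi \mapsto \int_{S^{n-1}} g(\langle \xi, \theta\rangle, \theta)\, d\theta$ as an intersection function of $g$, while Proposition \ref{t:fourier} characterizes any intersection function of $g$ as the Fourier expression on the left-hand side. A brief uniqueness argument is then needed: Definition \ref{d:intfunctionoffunctions} specifies the intersection function through its pairing against every $\varphi \in \mathcal{S}(\R^n)$, so two continuous representatives must agree pointwise.

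For the second identity, I would apply the Fourier transform to the first identity and use Fourier inversion. Writing $H(x) = |x|_2^{-n+1}(g(t, x/|x|_2))^\wedge_t(|x|_2)$, the first identity reads $\pi f = \widehat{H}$, so $\pi\widehat{f}(\xi) = \widehat{\widehat{H}}(\xi) = (2\pi)^n H(-\xi)$. The evenness of $g$ in the $t$-variable (built into $\mathcal{L}^n$), combined with the observation that only the $\theta$-symmetric part of $g$ contributes to the defining integral---which one checks via the substitution $(t,\theta) \mapsto (-t,-\theta)$ under which $\mathcal{R}\varphi(t,\theta)$ is invariant---allows one to treat $H$ as even, so $H(-\xi) = H(\xi)$. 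Setting $\xi = r\theta$ with $r > 0$ then extracts the identity $(g(t,\theta))^\wedge_t(r) = |r|^{n-1}\widehat{f}(r\theta)$, up to the normalization inherited from Proposition \ref{t:fourier}. The case $r<0$ follows from the $r$-evenness of $(g(t,\theta))^\wedge_t$.

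The only delicate point is tracking the normalization constants between the two representations; these are fully determined by the Fourier conventions adopted in Section \ref{s:prelims} and by the constants already computed in the two preceding proofs, so the verification is purely mechanical. No substantive new analysis beyond uniqueness of the intersection function and direct Fourier inversion is required, which is why this statement appears as a corollary rather than a theorem.
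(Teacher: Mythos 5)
Your approach is sound and is essentially what the paper intends: the paper offers no written proof of the corollary, merely declaring it a consequence of Propositions~\ref{t:radon} and~\ref{t:fourier}, which is precisely the reading you make precise. The first identity does indeed follow from the uniqueness argument you give (both sides satisfy Definition~\ref{d:intfunctionoffunctions}, and the pairing against all even $\varphi\in\mathcal{S}(\R^n)$ determines the continuous representative), and the second identity is Fourier inversion applied to the formula of Proposition~\ref{t:fourier}, exactly as you describe. You are also right to flag the $\theta$-symmetrization issue: since the defining pairing is insensitive to the replacement $g(t,\theta)\mapsto\tfrac12\bigl(g(t,\theta)+g(t,-\theta)\bigr)$ (because $\mathcal{R}\varphi(t,\theta)=\mathcal{R}\varphi(-t,-\theta)$), the function $H$ can only be recovered, and hence identity~\eqref{e:relation} can only hold pointwise in $\theta$, for the symmetrized $g$; the paper glosses over this.

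One point you defer as ``purely mechanical'' actually deserves closer scrutiny. Carrying out the inversion you outline with the paper's conventions, one gets from $\pi f = \widehat H$ that $\pi\widehat f = \widehat{\widehat H} = (2\pi)^n H$ (on even test functions), hence at $\xi = r\theta$ with $r>0$
\[
(g(t,\theta))_t^\wedge(r) \;=\; \frac{\pi}{(2\pi)^n}\,|r|^{n-1}\,\widehat f(r\theta),
\]
which differs from \eqref{e:relation} by the factor $\pi/(2\pi)^n = 1/\bigl(2(2\pi)^{n-1}\bigr)$. This is consistent with the displayed line $\hat f = 2(2\pi)^{n-1}H$ in the proof of Proposition~\ref{t:fourier}. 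So if one tracks the constants carefully, the identity in the corollary as printed is off by exactly that factor; it appears to be a normalization slip in the paper rather than a flaw in your reasoning. Since the corollary is used only to conclude that $r\mapsto|r|^{n-1}\widehat f(r\theta)$ is positive definite, a positive multiplicative constant is harmless, but a complete write-up of your proof should make the constant explicit rather than deferring it, precisely because the deferred check does not in fact come out as the paper states.
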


We also have the following uniqueness theorem:

\begin{corollary} Given a pair $g_1,g_2 \in \mathcal{L}^n$ such that 
\[
\int_{S^{n-1}}g_1(\langle x,\theta \rangle, \theta) d\theta = \int_{S^{n-1}}g_2(\langle x,\theta \rangle, \theta) d\theta \quad \text{for all } x\in \R^n,
\]
one has that $g_1=g_2$.
    
\end{corollary}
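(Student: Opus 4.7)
The plan is to reduce the statement to the injectivity of the one-dimensional Fourier transform on the integrable continuous functions, using the Fourier characterization of the intersection function already established in Corollary~\ref{t:fourier-radon}.

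First, by Proposition~\ref{t:radon}, the hypothesis says exactly that $g_1$ and $g_2$ admit the \emph{same} intersection function
\[
f(x) = \int_{S^{n-1}} g_1(\langle x,\theta\rangle,\theta)\, d\theta = \int_{S^{n-1}} g_2(\langle x,\theta\rangle,\theta)\, d\theta.
\]
Thus the two intersection functions on the left-hand side of Definition~\ref{d:intfunctionoffunctions} coincide as functions on $\R^n$.

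Next, I would invoke the key identity \eqref{e:relation} of Corollary~\ref{t:fourier-radon}, which asserts that for the intersection function $f$ of any $g\in \mathcal{L}^n$, one has
\[
(g(t,\theta))_t^\wedge(r) = |r|^{n-1} \hat{f}(r\theta), \quad (r,\theta)\in\R\times S^{n-1}.
\]
Applying this with $g = g_1$ and with $g = g_2$, since $g_1$ and $g_2$ share the same intersection function $f$, we get
\[
(g_1(t,\theta))_t^\wedge(r) = (g_2(t,\theta))_t^\wedge(r), \quad \text{for all } (r,\theta)\in\R\times S^{n-1}.
\]

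Finally, since $g_1,g_2 \in \mathcal{L}^n$ are continuous and integrable in $t$ for each fixed $\theta$, for every fixed $\theta\in S^{n-1}$ the functions $t\mapsto g_1(t,\theta)$ and $t\mapsto g_2(t,\theta)$ are continuous elements of $L^1(\R)$ whose one-dimensional Fourier transforms agree. By injectivity of the Fourier transform on $L^1(\R)$, they must be equal pointwise in $t$; ranging over $\theta\in S^{n-1}$ gives $g_1\equiv g_2$ on $\R\times S^{n-1}$. There is no real obstacle here — the entire content sits in \eqref{e:relation} together with Fourier uniqueness — and the only minor point to verify is that both sides are legitimate $L^1$ functions of $t$ so that $1$D Fourier inversion applies, which is immediate from the definition of $\mathcal{L}^n$.
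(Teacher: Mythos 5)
Your argument is correct and follows exactly the route the paper intends: the paper states this corollary without proof as an immediate consequence of Proposition~\ref{t:radon} and the identity \eqref{e:relation} of Corollary~\ref{t:fourier-radon}, and your reduction to injectivity of the one-dimensional Fourier transform on $L^1(\R)$ (for each fixed $\theta$, using continuity to upgrade to pointwise equality) is precisely that argument.
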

\bigbreak

The condition \eqref{e:relation} means that the function $r\to |r|^{n-1}\hat{f}(r\theta)$ is positive definite. We will use this property 
to define a more general class of intersection functions. Also, this condition allows to point out several examples of functions which are and are not
intersection functions, as follows.

\begin{example} \label{e:exponentials} Fix $\alpha,\beta >0$ and $\ell \in C(S^{n-1})$ even and strictly positive.

\begin{itemize}

    \item[(1)] For each $\theta \in S^{n-1}$ consider the function $h_{\theta}(r) = \alpha \exp(-|r|^2 \ell(\theta))$.  It can be check that $h_{\theta}$ is the Fourier transform of the non-negative function:
\[
(h_{\theta})_r^{\wedge}(t) = \alpha \sqrt{\frac{\pi}{\ell(\theta)}} e^{-\frac{1}{4\ell(\theta)} |t|^2} \geq 0.
\]
So, by Proposition~\ref{t:fourier},
\[
f(\xi) = \frac{1}{\pi}\left[|x|_2^{-n+1} \alpha e^{-|x|_2^2 \ell\left(\frac{x}{|x|_2}\right)}  \right]_x^{\wedge}(\xi)
\]
is the intersection function of the function
\[
g(t,\theta) = \alpha \sqrt{\frac{\pi}{\ell(\theta)}} e^{-\frac{1}{4 \ell(\theta)} |t|^2}
\]

\item[(2)] For each $\theta \in S^{n-1}$ consider the function $h_{\theta}(r) = \alpha \sqrt{\frac{\pi}{\beta}} e^{-\frac{1}{4\beta} |r|^2} \ell(\theta).$
Note that, as above,
\[
(h_{\theta})_r^{\wedge}(t) = \alpha \ell(\theta) e^{-\beta |t|^2} \geq 0.
\]
Again, according to Proposition~\ref{t:fourier}, 
\[
f(\xi) = \frac{1}{\pi} \left[|x|_2^{-n+1}\ell\left(\frac{x}{|x|_2}\right) \alpha \sqrt{\frac{\pi}{\beta}} e^{-\frac{1}{4\beta} |x|_2^2} \right]_x^{\wedge}(\xi) 
\]
is the intersection function of the function
\[
g(t,\theta) = \alpha \ell(\theta) e^{-\beta |t|^2}.
\]

\item[(3)] For each $\theta \in S^{n-1}$ consider the function $h_{\theta}(r) = \exp(-|r|\ell(\theta)).$
Notice that 
\[
(h_{\theta})_r^{\wedge}(t) = \frac{2\ell(\theta)}{t^2 + [\ell(\theta)]^2} \geq 0.
\]
Consequently, the function 
\[
f(\xi) = \frac{1}{\pi} \left[|x|_2^{-n+1}e^{-|x|_2 \ell\left(\frac{x}{|x|_2}\right)} \right]_x^{\wedge}(\xi)
\]
is the intersection function of 
\[
g(t,\theta) = \frac{2\ell(\theta)}{t^2 + [\ell(\theta)]^2}
\]

\item[(4)] More generally, fix $q \in (0,2]$, and for each $\theta \in S^{n-1}$, set 
\[
h_{\theta}(r) = \ell(\theta) e^{-|r|^q}.
\]
According to \cite[Lemma~2.27]{Kold1} 
\[
(h_{\theta})_r^{\wedge}(t) = \ell(\theta) \left(e^{-|r|^q}\right)_r^{\wedge}(t) := \ell(\theta) \gamma_q(t)
\]
is a positive function on $\R$. Consequently, the function
\[
f_q(\xi) = \frac{1}{\pi}\left[|x|_2^{-n+1}\ell\left(\frac{x}{|x|^2}\right)e^{-|x|_2^q} \right]_x^{\wedge}(\xi) 
\]
is the intersection function of 
\[
g_q(t,\theta) = \ell(\theta) \gamma_q(t).
\]

\end{itemize}
\end{example}

\begin{example}
To provide examples of functions which are not an intersection functions, for any $\theta \in S^{n-1}$ and $q>2$, consider functions of the form $h_{\theta}(r) = \ell(\theta)\exp(-|r|^{q}),$ where $\ell \in C(S^{n-1})$ is strictly positive. Taking the Fourier transform by $r \in \R$, we see that
\[
(h_{\theta})^{\wedge}_r(t) = \ell(\theta) (e^{-|r|^q})_r^{\wedge}
(t) := \ell(\theta) \gamma_q(t).
\]
But $\gamma_q(t)$ is not always non-negative (see \cite{Kold1}), so according to Corollary \ref{t:fourier-radon}, the function $f$ given by 
\[
f(x) = (2\pi)^{-n}\left[|x|_2^{-n+1}\ell\left( \frac{x}{|x|_2}\right) e^{-|x|_2^q}\right]^{\wedge}_{\xi}(x)
\]
fails to be an intersection function of any member of $\mathcal{L}^n.$

\end{example}

\section{Intersection Functions}  \label{s:intersection functions}

The next step is to define the class of functions which includes both intersection functions of functions and intersection bodies.  
We base our definition on the result of Corollary \ref{t:fourier-radon}, rather than use a more geometric definition in the spirit of
Lutwak's approach to intersection bodies which is based on the extension of the dual Radon transform to measures. 

\begin{definition}
\label{d:intfunction} A  non-negative, even, continuous, integrable function $f$ on $\R^n$ is called an {\it intersection function} if, for every direction $\theta \in S^{n-1}$, the function 
\[
r \in \R \mapsto |r|^{n-1} \hat{f} (r\theta)
\]
is a positive definite function on $\R$.
\end{definition}

The geometric definition now becomes a theorem, as follows. The theorem was formulated in the Introduction; we recall the statement.

\begin{reptheorem}{t:classification} An even, continuous, non-negative, and integrable function $f$ defined on $\R^n$ is an intersection function if, and only if, for every direction $\theta \in S^{n-1}$, there exists a non-negative, even, finite Borel measure $\mu_{\theta}$ on $\R$ such that 
\begin{itemize}
    \item the function 
\[
\theta \in S^{n-1} \mapsto \int_{\R} \mathcal{R}\varphi(t,\theta) d\mu_{\theta}(t)
\]
belongs to $L(S^{n-1})$ whenever $\varphi \in \mathcal{S}(\R^n)$, and 
\item 
\[
\int_{\R^n}f \varphi = \int_{S^{n-1}} \int_{\R} \mathcal{R}\varphi(t,\theta) d\mu_{\theta}(t) d\theta.
\]
holds for all $\varphi \in \mathcal{S}(\R^n).$
\end{itemize}
    
\end{reptheorem}

\begin{proof}
Begin by recalling the connection between the Radon transform and the Fourier transform: For any fixed direction $\theta \in S^{n-1}$, the function $g(t) = \mathcal{R} \hat{\varphi} (t,\theta)$ is the Fourier transform of the function $h(z) = (2\pi)^{n-1} \varphi(z\theta)$, $t,z \in \R$ whenever $\varphi$ is a test function on $\R^n$.

Assume that $f$ is an intersection function. For each $\theta \in S^{n-1}$ we are tasked with finding a finite, positive Borel measure $\mu_{\theta}$ on $\R$ for which 
\[
\int_{\R^n}f(x) \varphi(x) dx = \int_{S^{n-1}}\int_{\R} \mathcal{R}\varphi(t,\theta) d\mu_{\theta}(t)d\theta
\]
holds whenever $\varphi \in \mathcal{S}(\R^n).$

Since $f$ is an intersection function on $\R^n$, for every direction $\theta \in S^{n-1}$, the function $h_{\theta}(r) = |r|^{n-1} \hat{f}(r\theta)$ is a positive definite function on $\R$. Therefore, by Bochner's theorem, for each $\theta \in S^{n-1}$, there exists a finite, positive Borel measure $\nu_{\theta}$ on $\R$ such that the Fourier transform of $\nu_{\theta}$ is equal to $h_{\theta}$. 
Notice that, by applying Parseval's identity on $\R^n$ and then again on $\R$, we have 
\begin{align*}
\langle f, \varphi \rangle &= (2\pi)^{n} \langle \hat{f}, \hat{\varphi} \rangle =\frac{(2\pi)^n}{2} \int_{S^{n-1}} \int_{\R}|r|^{n-1}\hat{f}(r\theta)\hat{\varphi}(r\theta) dr d\theta\\
&= \frac{(2\pi)^n}{2} \int_{S^{n-1}}\int_{\R}(|\cdot|^{n-1}\hat{f}(\cdot\theta))_r^{\wedge}(z)(\hat{\varphi}(\cdot \theta))_r^{\wedge}(z) dz d\theta\\
&=\frac{(2\pi)^n}{2} \int_{S^{n-1}}\int_{\R} \mathcal{R}\varphi(s,\theta) d\nu_{\theta}(s)d\theta
\end{align*}
whenever $\varphi \in \mathcal{S}(\R^n)$ is even, which is exactly the condition \eqref{e:intfunrelation}. 
 
Conversely, assume that the condition \eqref{e:intfunrelation} holds. For every fixed direction $\theta$, by Bochner's theorem, the Fourier transform of the measure $\mu_{\theta}$ is a continuous, positive definite function $f_{\theta}$ defined on $\R$. Consequently, for any even test function $\varphi \in \mathcal{S}(\R^n)$, applying Parseval's identity to the integral by $dt$, we have that 
\begin{align*}
\langle \hat{f}, \varphi \rangle &= \langle f, \hat{\varphi} \rangle= \int_{S^{n-1}} \int_{\R} \mathcal{R}\hat{\varphi}(t,\theta) d\mu_{\theta}(t) d\theta\\
&=\int_{S^{n-1}} \langle \mathcal{R} \hat{\varphi} (\cdot,\theta), \mu_{\theta}(\cdot) \rangle d\theta = \int_{S^{n-1}} \langle [\mathcal{R}\hat{\varphi}(\cdot, \theta)]_t^{\wedge}, [\mu_{\theta}]_t^{\wedge} \rangle d\theta\\
&= (2\pi)^{n-1}\int_{S^{n-1}}\int_{\R} \varphi(z\theta) f_{\theta}(z)dz d\theta\\
&= 2(2\pi)^{n-1} \int_{S^{n-1}}\int_{0}^{\infty} \frac{|r|^{n-1}}{|r|^{n-1}} \varphi(z\theta) f_{\theta}(z) dz d\theta\\
&=2(2\pi)^{n-1} \int_{\R^n}|x|_2^{-n+1} \varphi(x) f_{\frac{x}{|x|_2}}(|x|_2)dx\\
&= 2(2\pi)^{n-1} \langle |x|_2^{-n+1} f_{\frac{x}{|x|_2}}(|x|_2), \varphi \rangle.
\end{align*} 
So it must be the case that 
\[
\hat{f}(x) = |x|_2^{-n+1}f_{\frac{x}{|x|_2}}(|x|_2)
\]
as distributions.

Hence, for any fixed direction $\theta \in S^{n-1}$, using the positive definiteness of the function $f_{\theta}$, one has 
\begin{align*}
\langle [|\cdot|^{n-1} \hat{f}(\cdot\theta)]_r^\wedge, \psi \rangle &= \langle |\cdot|^{n-1}\hat{f}(\cdot \theta), \hat{\psi} \rangle\\
&= \int_{\R} f_{\theta}(z) (\psi)^{\wedge}_t(z) dz\\
&= \int_{\R} (f_{\theta})_z^{\wedge}(s) \psi(s) ds \geq 0,
\end{align*}
whenever $\psi \in \mathcal{S}(\R)$ is even and non-negative.  Therefore, the function $f$ is an intersection function. 

\end{proof}

In the following subsections we will examine some examples of intersection functions. In particular, we will see that the class of intersection functions contains the class of intersection bodies of star bodies. 

\subsection{The spherical Radon transform}

Let $\ell \in C(S^{n-1})$ be continuous and strictly positive. Given $\e >0$, consider the function
\[
g_{\e}(t,\theta) = \ell(\theta) \frac{1}{\sqrt{\pi \e}} e^{-\e^2|t|^2}. 
\]
As we saw in Example~\ref{e:exponentials}, for each $\e>0$, the intersection function $f_{\e}$ of $g_{\e}$ is 
\[
f_{\e}(\xi) = \frac{1}{\pi} \left[|x|_2^{-n+1}\ell\left(\frac{x}{|x|_2}\right)  \sqrt{\frac{\pi}{\e}} e^{-\frac{1}{4\e^2} |x|_2^2} \right]_x^{\wedge}(\xi).
\]
By the definition of an intersection function of $g$, for every even $\varphi \in S(\R^n)$ and any fixed $\e>0$, 
\begin{align*}
&\int_{\R^n}\frac{1}{\pi} \left[|x|_2^{-n+1}\ell\left(\frac{x}{|x|_2}\right)  \sqrt{\frac{\pi}{\e}} e^{-\frac{1}{4\e^2} |x|_2^2} \right]_x^{\wedge}(\xi) \varphi(\xi) d\xi = \int_{\R^n}f_{\e}(\xi) \varphi(\xi)d\xi\\
&=\int_{S^{n-1}}\int_{\R} \mathcal{R}\varphi(t,\theta) g_{\e}(t,\theta)dt d\theta \\
&= \int_{S^{n-1}} \int_{\R} \mathcal{R}\varphi(t,\theta) \ell(\theta) \frac{1}{\sqrt{\pi \e}} e^{-\e^2|t|^2} dtd\theta.\\
\end{align*}
Sending $\e \to 0$, the left-hand side of the above equality tends to 
\begin{equation*}
\begin{split}
&\frac{1}{\pi}\int_{\R^n} \left[|x|_2^{-n+1} \ell\left(\frac{x}{|x|_2}\right)(\delta_0(t))_t^{\wedge}(|x|_2) \right]_x^{\wedge}(\xi) \varphi(\xi) d \xi\\
&= \frac{1}{\pi}\int_{\R^n} \left[|x|_2^{-n+1} \ell\left(\frac{x}{|x|_2} \right) \right]_x^{\wedge}(\xi) \varphi(\xi) d \xi
\end{split}
\end{equation*}
whenever $\varphi \in S(\R^n)$ is even, where $\delta_0$ is the delta function. Here we have used the fact that $\hat{\delta} \equiv 1$. Similarly, $\e \to 0$ the right-hard side tends to 
\begin{align*}
\int_{S^{n-1}} \mathcal{R}\varphi(0,\theta) \ell(\theta) d\theta = \int_{S^{n-1}}R\varphi(\theta) \ell(\theta)d\theta,
\end{align*}
where $R \colon C(S^{n-1}) \to C(S^{n-1})$ denotes the spherical Radon transform.  

Consequently, we have shown that 
\begin{equation}\label{e:spherical condition} 
\frac{1}{\pi}\int_{\R^n} \left[|x|_2^{-n+1} \ell\left(\frac{x}{|x|_2} \right) \right]_x^{\wedge}(\xi) \varphi(\xi) d \xi = \int_{S^{n-1}}R\varphi(\theta) \ell(\theta)d\theta  
\end{equation}
whenever $\varphi \in \mathcal{S}(\R^n)$ is even.
From Theorem~\ref{t:classification} paired with Bochner's theorem, limits of intersection functions are themselves intersection functions, so it follows that 
\[
f(\xi) = \frac{1}{\pi} \left[|x|_2^{-n+1}\ell \left(\frac{x}{|x|_2}\right) \right]^{\wedge}_x(\xi)
\]
is an intersection function. In particular, $f$ is a continuous function on the sphere extended to a homogeneous function of the order $-1$
on $\R^n\setminus \{0\},$ which recover \cite[Lemma~3.7]{Kold1}: For every $\theta \in S^{n-1}$, 
\[
\frac{1}{\pi}\left[|x|_2^{-n+1}\ell\left(\frac{x}{|x|_2}\right) \right]_x^{\wedge}(\theta) = R\ell(\theta).
\]

Next, we can rewrite equality \eqref{e:spherical condition} to get 
\begin{align*}
\int f(x) \varphi(x) dx &= \int_{S^{n-1}} R\varphi(\theta) \ell(\theta) d\theta\\
&=\int_{S^{n-1}} \left(\int_{\langle x, \theta \rangle = 0} \varphi(x) dx\right) \ell(\theta) d\theta\\
&= \int_{S^{n-1}} \left(\int_{S^{n-1} \cap \theta^{\perp}} \int_0^{\infty} s^{n-2}\varphi(s\omega) ds d\omega \right)\ell(\theta) d\theta.
\end{align*}

If we denote by $h(\theta)=\int_0^\infty r^{n-2}\varphi(r\theta) dr,$ then we recover the well-known self-duality
property of the spherical Radon transform: For any $\ell,h\in C(S^{n-1})$
$$\int_{S^{n-1}} R\ell(\theta) h(\theta) d\theta = \int_{S^{n-1}} Rh(\theta) \ell(\theta) d\theta.$$

\subsection{Intersection bodies.} 
In the previous example, set $\ell(\theta) = \|\theta\|_L^{-n+1},$ where $L$ is an origin-symmetric star body in $\R^n$. Then we recover the Fourier formula for the volume of a section, \cite[Th.3.8]{Kold1}:
$$f(x)=(n-1)|x|_2^{-1} \left|L\cap\left(\frac x{|x|_2}\right)^\bot\right|=\frac{1}{\pi}(\|\cdot\|_L^{-n+1})^\wedge(x).$$
In fact, we have shown that the concept of intersection function as described in Definition~\ref{d:intfunction} extends the notion of intersection  bodies.

Moreover, we have $f(x)=(n-1)\|x\|_{IL}^{-1},$ so we recover the result of \cite[p.72]{Kold1}: For every $\xi\in S^{n-1}$
$$\left(\|x\|_{IL}^{-1}\right)^\wedge(\xi) = \frac{(2\pi)^n}{\pi(n-1)}\|\xi\|_L^{-n+1}.$$
In particular, an origin-symmetric star body $K$ is an intersection body of a star body if, and only if,
the Fourier transform of $\|\cdot\|_K^{-1}$ is a $(-n+1)$-homogeneous function on $\R^n$ whose restriction
to the sphere is continuous and strictly positive, cf \cite[Th.4.1]{Kold1}.

\section{The case of the Radon transform}\label{s:Radonaffirmative}

In this section, we prove Theorem~\ref{t:L2BP} and Theorem~\ref{t:RadonComparisonNegative}. 

\begin{reptheorem}{t:L2BP} Let $p >0$ and consider a pair of continuous, non-negative even functions $\varphi,\psi \in L^1(\R^n) \cap L^p(\R^n)$ satisfying the condition 
\[
\mathcal{R}\varphi(t,\theta) \leq \mathcal{R}\psi(t,\theta) \quad \text{for all } (t,\theta) \in \R \times S^{n-1}.
\] Then:
\begin{itemize}
    \item[(a)] if $p >1$ and $\varphi^{p-1}$ is an intersection function, then $\|\varphi\|_{L^p(\R^n)} \leq \|\psi\|_{L^p(\R^n)}$, and 
    \item[(b)] if $0 < p <1$ and $\psi^{p-1}$ is an intersection function, then $\|\varphi\|_{L^p(\R^n)} \leq \|\psi\|_{L^p(\R^n)}$.
\end{itemize}
\end{reptheorem}

\begin{proof} Without loss of generality, we may assume that $\varphi, \psi \in \mathcal{S}(\R^n)$. 

We begin the proof of (a). Since $\varphi^{p-1}$ is an intersection function, by Theorem~\ref{t:classification}, for each $\theta \in S^{n-1}$ there exists a non-negative, even, finite Borel measure $\mu_{\theta}$ on $\R$ such that the function
\[
\alpha_{\theta}:= \int_{\R}\mathcal{R}\alpha(t,\theta) d\mu_{\theta}(t)
\]
is integrable on $S^{n-1}$ for any $\alpha \in \mathcal{S}(\R^n)$. Integrating both sides of the assumption \eqref{e:LpBPassump} over $\R$ with respect to the measure $\mu_{\theta}$, we then have the inequality 
\[
\varphi_{\theta} \leq \psi_{\theta} \quad \text{for all } \theta \in S^{n-1}.
\]
Integrating the above inequality over $S^{n-1}$ and applying the identity \eqref{e:intfunrelation} of Theorem~\ref{t:classification}, we obtain
\begin{align*}
\int_{\R^n}\varphi(x)^p dx &= \int_{\R \times S^{n-1}}\mathcal{R} \varphi(t\theta) d\mu_{\theta}(t)d\theta\\
&= \int_{S^{n-1}} \varphi_{\theta} d\theta \leq \int_{S^{n-1}} \psi_{\theta} d\theta\\
&=\int_{\R \times S^{n-1}}\mathcal{R} \psi(t\theta) d\mu_{\theta}(t)d\theta = \int_{\R^n} \varphi(x)^{p-1} \psi(x) dx\\
&\leq \left(\int_{\R^n} \varphi(x)^{p}dx \right)^{\frac{p-1}{p}} \left(\int_{\R^n} \psi(x)^{p}dx\right)^{\frac{1}{p}},
\end{align*}
where in the last line we applied H\"older's inequality.

The proof of part (b) is the same as (a) with a minor adjustment akin to the proof of Theorem~\ref{t:sphericalAffirmativeCase}(b). 
\end{proof}

\smallbreak
Next, we treat the second part of Problem~\ref{p:fBP}, Theorem~\ref{t:RadonComparisonNegative}, which we restate here. 

\begin{reptheorem}{t:RadonComparisonNegative} The following hold: 
\begin{itemize}
    \item[(a)] Fix $p >1$ and let $\psi \in \mathcal{S}(\R^n)$ be non-negative and even. If $\psi^{p-1}$ is not an intersection function, then there exists an even, non-negative  $\varphi \in \mathcal{S}(\R^n)$ such that 
\[
\mathcal{R} \varphi(t,\theta) \leq \mathcal{R} \psi(t,\theta) \quad \text{for all } (t,\theta) \in \R \times S^{n-1},
\]
but with $\|\psi\|_{L^{p}(\R^n)} < \|\varphi\|_{L^{p}(\R^n)}$.
\item[(b)] Fix $0 < p <1$ and let $\varphi \in \mathcal{S}(\R^n)$ be non-negative and even. If $\varphi^{p-1}$ is not an intersection function, then there exists a non-negative, even $\psi \in \mathcal{S}(\R^n)$ such that $\mathcal{R}\varphi \leq \mathcal{R}\psi$, but with $\|\psi\|_{L^{p}(\R^n)} < \|\varphi\|_{L^{p}(\R^n)}$.
\end{itemize} 
\end{reptheorem}

\begin{proof}
We will present the proof of part (a). For brevity, set $\psi_{\theta}(t)=|t|^{n-1}\widehat{\psi^{p-1}}(t\theta)$ whenever $\theta \in S^{n-1}$. To begin, we will show that there is a symmetric set $\Gamma \subset S^{n-1}$ of positive $S^{n-1}$ measure such that $\psi_{\omega}$ is not a positive definite function of $t \in \R$ whenever $\omega \in \Gamma$.

Since $\psi \in \mathcal{S}(\R^n)$, we get that $\widehat{\psi^{p-1}} \in \mathcal{S}(\R^n)$. Using the fact that $\psi^{p-1}$ is not an intersection function, there exists a direction $\nu \in S^{n-1}$ such that $\psi_{\nu}$ is not positive definite on the line $\R\nu:=\{t\nu \colon t \in \R\}$. In particular, since $\psi_{\nu}$ is not positive definite on $\R\nu$, there exists some non-empty, open symmetric set $I_{\nu} \subset \R \nu$, and a non-negative, even $f \in \mathcal{S}(\R)$ such that 
\[
\langle \psi_{\nu}, \widehat{f} \rangle = \int_{\R} \psi_{\nu}(t) \widehat{f}(t)dt = -\delta <0. 
\]
Our goal is to show that there is a  small neighborhood $\Omega$ of $\nu$ in $S^{n-1}$ such that
$$|\langle \psi_{\nu} - \psi_{\omega}, \widehat{f} \rangle| \le \frac{\delta}{2},$$
for all $\omega \in \Omega$. From the Cauchy-Schwartz inequality:
$$
|\langle \psi_{\nu} - \psi_{\omega}, \widehat{f} \rangle| \le  \|\psi_{\nu} - \psi_{\omega}\|_{L^2(\R)} \|f\|_{L^2(\R)}.
$$

Since $\|f\|_{L^2(\R)}$ is bounded, it is enough to show that $\|\psi_{\nu} - \psi_{\omega}\|_{L^2(\R)}$ is small whenever $\omega,\nu \in S^{n-1}$ are sufficiently close in norm. Using the fact that $\widehat{\psi^{p-1}} \in \mathcal{S}(\R^n)$, it is a locally Lipschitz function. Moreover, there exists $m \in \mathbb{N}$ and $M>0$ such that 
\[
|\widehat{\psi^{p-1}}(x)| \leq \frac{1}{|x|^m} \quad \text{for all } |x|_2 > M,\quad \text{and } 
\int_R^\infty t^{2(n-1)}|\widehat{\psi^{p-1}}(t\omega)|^2dt \le \frac{\delta^2}{16\|f\|_{L^2(\R)}^2}\]
holds for all $\omega \in S^{n-1}$.  

 Let $C(\psi)= 2M^{2n+2} L_{(\widehat{\psi^{p-1}})}^2$, where $L_{\widehat{\psi^{p-1}}}$ is the Lipschitz constant of $\widehat{\psi^{p-1}}$ on $[0,R]$. 
 Now, if $\omega \in S^{n-1}$ satisfies $|\nu -\omega|_2 <\sqrt{\frac{\delta^2}{8 C(\psi) \|f\|_{L^2(\R)}}}$, then 
\begin{align*}
&\|\psi_{\nu}- \psi_{\omega}\|_{L^2(\R)}^2 =\int_{\R}|\psi_{\nu}(r) - \psi_{\omega}(t)|^2 dt\\
&=2\int_0^M t^{2(n-1)}|(\widehat{\psi^{p-1}})(t\nu)-(\widehat{\psi^{p-1}})(t\omega)|^2 dt+ 2 \int_M^{\infty}t^{2(n-1)}|(\widehat{\psi^{p-1}})(t\nu)-(\widehat{\psi^p})(t\omega)|^2 dt \\
&\leq 2\int_0^Mt^{2(n-1)}|(\widehat{\psi^{p-1}})(t\nu)-(\widehat{\psi^{p-1}})(t\omega)|^2 dt +\frac{\delta^2}{4 \|f\|_{L^2(\R)}}\\
&\leq 2M^{2n+2} L_{(\widehat{\psi^{p-1}})}^2|\nu - \omega|_2^2+ \frac{\delta}{ 4\|f\|_{L^2(\R)}} \leq \frac{\delta^2}{2\|f\|_{L^2(\R)}}.
\end{align*}
Therefore, given $\omega \in S^{n-1}$ with $|\nu -\omega|_2 <\sqrt{\frac{\delta^2}{8 C(\psi) \|f\|_{L^2(\R)}}}$, the Cauchy-Schwartz inequality implies
\[
\langle \psi_{\omega}, \widehat{f} \rangle = \langle \psi_{\omega} - \psi_{\nu}, \widehat{f} \rangle + \langle \psi_{\nu}, \widehat{f} \rangle \leq \|f\|_{L^2} \|\psi_{\omega} -\psi_{\nu}\|_{L^2(\R)} - \delta < \frac{\delta}{2} - \delta< 0. 
\]
It follows that the function $\psi_{\omega}$ is not positive definite on $\R$ whenever $\omega \in S^{n-1}$ is sufficiently close to $\nu$. Set
\[
\Omega := \left\{\omega \in S^{n-1} \colon |\nu - \omega|_2 < \sqrt{\frac{\delta^2}{8 C(\psi) \|f\|_{L^2(\R)}}} \right\}. 
\]
Denote by $\sigma$ the uniform measure on $S^{n-1}$. Then $\sigma(\Gamma) >0$ and that $\Omega$ is symmetric. Moreover, observe that $\psi_{\omega}$ is not positive definite on $\R$ whenever $\omega \in \Gamma$. 

We extend $\Omega$ to $\R^n$ by letting 
\[
\widetilde{\Omega} := \bigcup_{\omega \in \Gamma} \R\omega.
\]
which is symmetric in $\R^n$.  Since $\psi_{\omega}$ is not positive definite on the line $\R\omega$, and $\psi_{\omega} \in \mathcal{S}(\R)$, it follows that there exists a non-empty symmetric open set $\Lambda_{\omega} \subset \R \omega$ on which $\widehat{\psi_{\omega}}<0$.  Finally, we let 
\[
\Lambda := \bigcup_{\omega \in \Gamma} \Lambda_{\omega}.
\]

Let $h \in \mathcal{S}(\R^n)$ be non-negative, even, and such that $h > 0$ only on the set $\Lambda$. Consider the function $\varphi$ defined by 
\[
\varphi(x) = \psi(x) -\eta h(x),
\]
with $\eta$ sufficiently small so that $\varphi \geq 0$ on $\R^n.$ Notice that, for any $(t,\theta) \in \R \times S^{n-1}$, we obtain 
\[
\mathcal{R}\varphi(t,\theta) = \mathcal{R}\psi(t,\theta) - \eta \mathcal{R}h(t,\theta) \leq \mathcal{R}\psi(t,\theta). 
\]

Now, for each fixed direction in the sphere $\theta \in S^{n-1}$, we have 
\begin{align*}
\int_{\R}\widehat{\psi_{\theta}}(t)\mathcal{R}\varphi(t,\theta) dt &= \int_{\R} \widehat{\psi_{\theta}}(t)\mathcal{R}\psi(t,\theta)-\eta\int_{\R}\widehat{\psi_{\theta}}(t)\mathcal{R}h(t,\theta)\\
&\geq \int_{\R} \widehat{\psi_{\theta}}(t)\mathcal{R}\psi(t,\theta),
\end{align*}
However, if $\theta \in \Gamma$, then the last inequality is strict, since  $h > 0$ only on the admissible set $\Lambda$. For each fixed $\theta \in S^{n-1}$, applying the Parseval identity to both sides of the above inequality, we obtain 
\[
\int_{\R}|s|^{n-1} \widehat{\psi^{p-1}}(s\theta) \widehat{\varphi}(s\theta)ds \geq \int_{\R}|s|^{n-1} \widehat{\psi^{p-1}}(s\theta) \widehat{\psi}(s\theta)dr.
\]
Integrating the above inequality over $S^{n-1}$, we obtain 
\begin{align*}
&\int_{S^{n-1}} \int_{\R}|r|^{n-1} \widehat{\psi^{p-1}}(r\theta) \widehat{\varphi}(r\theta)dr d\theta\\
&=\int_{S^{n-1} \setminus \Gamma} \int_{\R}|s|^{n-1} \widehat{\psi^{p-1}}(r\theta) \widehat{\varphi}(s\theta)dr d\theta + \int_{\Gamma} \int_{\R}|s|^{n-1} \widehat{\psi^{p-1}}(s\theta) \widehat{\varphi}(s\theta)ds d\theta\\
&>\int_{S^{n-1} \setminus \Gamma} \int_{\R}|s|^{n-1} \widehat{\psi^{p-1}}(s\theta) \widehat{\psi}(s\theta)ds d\theta + \int_{\Gamma} \int_{\R}|s|^{n-1} \widehat{\psi^{p-1}}(s\theta) \widehat{\psi}(s\theta)ds d\theta\\
&=\int_{S^{n-1}} \int_{\R}|s|^{n-1} \widehat{\psi^{p-1}}(s\theta) \widehat{\psi}(s\theta)ds d\theta,
\end{align*}
where, in the second to last line, we used the fact that $\sigma(\Gamma) > 0$. 
Using symmetry of the functions $\psi$ and $\varphi$, and integrating in polar coordinates, we deduce the inequality 
\[
\int_{\R^n}\widehat{\psi^{p-1}}(x)\widehat{\varphi}(x)dx > \int_{\R^n}\widehat{\psi^{p-1}}(x)\widehat{\psi}(x)dx.
\]
Applying Parseval's identity to the above inequality followed by H\"older's inequality, we complete the proof. 

The proof of (b) follows from combining the above proof with the ideas in proof of Theorem~\ref{t:sphericalNegativeCase}b.
\end{proof}

\section*{Acknowledgments}

M.R. would like to thank the Department of Mathematics at the University of Missouri. M.R. and A.Z would like to thank Sorbonne University and LAMA at Universit\'e Gustave Eiffel for wonderful and productive stays during which a significant part of this manuscript was produced.  M.R. would also like to thank Effrosyni Chasioti for many helpful discussions about the content of the article. We woule like to thank Dimtry Ryabogin for suggesting we consider the case of  $p \in (0,1)$ in Problems~\ref{p:SphericalComparison} and \ref{p:fBP}.


\vspace{3mm}

\noindent {\sc Department of Mathematics, University of Missouri, Columbia. MO, USA.}
\noindent {\it E-mail address:} {\tt koldobskiya@missouri.edu}

\vspace{2mm}

\noindent {\sc  Institute for Computational and Experimental Research in Mathematics, Brown University, Providence, RI USA.\\}
\noindent {\it E-mail address:} {\tt michael\_roysdon@brown.edu}

\vspace{2mm}

\noindent {\sc Department of Mathematical Sciences, Kent State University, Kent, Ohio USA.}
\noindent {\it E-mail address:} {\tt zvavitch@math.kent.edu}

\

\end{document}